\documentclass[reqno]{amsart}

\usepackage{amscd,amssymb,amsmath,graphicx,verbatim}
\newtheorem {thm}{Theorem}[section]
\newtheorem{lem}[thm]{Lemma}
\newtheorem{prop}[thm]{Proposition}
\newtheorem{cor}[thm]{Corollary}
\newtheorem{df}[thm]{Definition}

\newtheorem{ex}[thm]{Example}
\newtheorem{exs}[thm]{Examples}

\newtheorem{quo}[thm]{Question}

\begin{document}

\title[$e$-Reduced rings in terms of the Zhou radical]{$e$-Reduced rings in terms of the Zhou radical}

\author{Handan Kose}
\address{Handan Kose, Department of Computer Science, Ankara
University, Ankara, Turkey} \email{handankose@ankara.edu.tr}

\author{Burcu Ungor}
\address{Burcu Ungor, Department of Mathematics, Ankara
University, Ankara, Turkey}\email{bungor@science.ankara.edu.tr}

\author{Abdullah Harmanci}
\address{Abdullah Harmanci, Department of Mathematics, Hacettepe
University, Ankara,~ Turkey}\email{harmanci@hacettepe.edu.tr}

\date{}

\begin{abstract} Let $R$ be a ring, $e$ an idempotent of $R$
and $\delta(R)$  denote the intersection of all essential maximal
right ideals of $R$ which is called Zhou radical. In this paper,
the Zhou radical of a ring is applied to the $e$-reduced property
of rings. We call the ring $R$ {\it Zhou right} (resp. {\it left})
{\it $e$-reduced}  if for any nilpotent $a$ in $R$, we have $ae\in
\delta(R)$ (resp. $ea\in \delta(R))$. Obviously, every ring is
Zhou $0$-reduced and a ring $R$ is Zhou right (resp., left)
$1$-reduced if and only if $N(R)\subseteq \delta(R)$. So we assume
that the idempotent $e$ is nonzero. We investigate basic
properties of  Zhou right $e$-reduced rings. Furthermore, we
supply some sources of examples for Zhou right $e$-reduced rings.
In this direction, we show that right $e$-semicommutative rings
(and so right $e$-reduced rings and $e$-symmetric rings), central
semicommutative rings and weak symmetric rings are Zhou right
$e$-reduced.  As an application, we deal with some extensions of
Zhou right $e$-reduced rings. Full matrix rings need not be Zhou
right $e$-reduced, but we present some Zhou right $e$-reduced
subrings of full matrix rings over
Zhou right $e$-reduced rings.\\[2mm]
\noindent{\bf Mathematics Subject Classification (2020):} 16N40,
16U40, 16U80, 16U99, 16S50

 \noindent{\bf Keywords:}  Reduced ring, $e$-reduced ring, Zhou radical, idempotent element, nilpotent element
\end{abstract}
\maketitle
\section{Introduction}
Throughout this paper, all rings are associative with identity.
For a ring $R$, we use $N(R)$, Id$(R)$, $U(R)$ and $C(R)$ to
represent the set of all nilpotents, the set of all idempotents,
the set of all invertible elements and the center of $R$,
respectively. Also, $J(R)$ and $\delta(R)$ stand for the Jacobson
radical and the Zhou radical of a ring $R$, respectively. Denote
the $n\times n$ full (resp., upper triangular) matrix ring over
$R$ by $M_n(R)$ (resp., $U_n(R))$, and $D_n(R)$ denotes the
subring of $U_n(R)$ having all diagonal entries are equal and
$V_n(R) = \{A = (a_{ij})\in D_n(R)\mid a_{ij} = a_{(i+1)(j+1)}$
for $i = 1,\dots,n - 2$ and $j = 2,\dots,n-1\}$ is a subring of
$D_n(R)$. Also, $\Bbb Z$ and $\Bbb Z_n$ denote the ring of
integers and the ring of integers modulo $n$.

The notion of reduced ring and its various generalizations have
been comprehensively studied in the literature. A ring is called
{\it reduced} if it has no nonzero nilpotent elements. Reduced
rings are extended to the $e$-reduced rings in \cite{MW} and
central reduced rings in \cite{UHKH}. Quasi-reduced rings, a
weaker condition than being central reduced is defined in
\cite{KUHH}. Let $R$ be a ring and $e\in $ Id$(R)$. Then $R$ is
called {\it left} (or {\it right}) {\it $e$-reduced} if $eN(R) =
0$ (or $N(R)e = 0$), and $R$ is said to be {\it central reduced}
if every nilpotent element of $R$ is central. In \cite{KUHH}, a
ring $R$ is called {\it quasi-reduced} if for any $a, b\in R$, $ab
= 0$ implies $(aR)\cap (Rb)$ is contained in the center of $R$. As
an another generalization of the reducedness, in \cite{CGHH}, a
ring $R$ is called {\it $J$-reduced} if $N(R)\subseteq J(R)$.

A weaker condition than ``reduced" was defined by Lambek in
\cite{L}, that is, a ring $R$ is called {\it symmetric} if having
$abc =0$ implies $acb = 0$ for all $a, b, c\in R$. Symmetric rings
are generalized to weakly symmetric rings in \cite{UKYH}, weak
symmetric rings in \cite{OC} and $e$-symmetric rings in \cite{MW}.
A ring $R$ is said to be {\it weakly symmetric} if for all $a, b,
c, r\in R$, $abc\in N(R)$ implies  $Racrb\subseteq N(R)$,
equivalently, $abc\in N(R)$ implies  $acrbR\subseteq N(R)$. A ring
$R$ is called {\it weak symmetric} if $abc\in N(R)$ implies
$acb\in N(R)$ for all $a, b, c\in R$. A ring $R$ with $e\in $
Id$(R)$ is called {\it $e$-symmetric} if $abc = 0$ implies $acbe =
0$ for all $a, b, c\in R$. It is known that right $e$-reduced
rings are $e$-symmetric. Also, in \cite{MWC}, a ring $R$ with
$e\in $ Id$(R)$ is said to be {\it weak $e$-symmetric} if $abc =
0$ implies $eacbe = 0$ for all $a, b, c\in R$.

As a generalization of symmetricity, in \cite{Sh},
semicommutativity of a ring is defined as follows: a ring $R$ is
called {\it semicommutative} if for any $a, b\in R$, $ab = 0$
implies $aRb = 0$. There are many papers to investigate
semicommutative rings and their generalizations. It is well known
that every semicommutative ring is abelian. The notion of central
semicommutative ring is introduced in \cite{OAH}. A ring $R$ is
called {\it central semicommutative} if for any $a, b\in R$, $ab =
0$ implies $aRb\subseteq C(R)$. Another generalization of
semicommutativity is $e$-semicommutativity which is defined in
\cite{KUH}. A ring $R$ is called {\it right}  (resp. {\it left})
{\it $e$-semicommutative} if for any $a, b\in R$, $ab = 0$ implies
$aRbe = 0$ (resp. $eaRb = 0$). The ring $R$ is called {\it
$e$-semicommutative} in case $R$ is both right and left
$e$-semicommutative. Also, in \cite{X}, a ring $R$ is said to be
{\it $J$-semicommutative} if for any $a, b \in R$, $ab = 0$
implies $aRb\subseteq J(R)$.

Zhou introduced the notion of $\delta$-small submodule in
\cite{Zh}. Let $M$ be a module and $N$ be a submodule of $M$. Then
$N$ is called {\it $\delta$-small} in $M$ if whenever $M = N + K$
and $M/K$ is singular where $K$ is a submodule of $M$, then $M =
K$. The sum of $\delta$-small submodules is denoted by
$\delta(M)$. Considering the ring $R$ as a right $R$-module over
itself, the ideal $\delta(R)$ is introduced as a sum of
$\delta$-small right ideals of $R$. By \cite[Corollary 1.7]{Zh},
$J(R/$Soc$(R_R))=\delta(R)/$Soc$(R_R)$. Since Zhou introduced the
delta submodule, $\delta(M)$ is named the {\it Zhou radical} of
$M$.

In ring theory, the Zhou radical and the notion of reducedness and
related notions play important roles and have generated wide
interest. With this motivation, in this paper, we combine these
concepts by investigating the $e$-reducedness within the Zhou
radical, which we refer to as ``Zhou $e$-reducedness". We
summarize the contents of the paper. In Section 2, we continue to
investigate some properties of the Zhou radical to use in the
sequel of the paper. In Section 3, we focus on the Zhou
$e$-reduced rings. Some examples are supplied to show that Zhou
$e$-reduced rings are abundant. In Section 4, we deal with some
extensions of Zhou $e$-reduced rings. Finally, in Section 5, we
investigate some matrix rings in terms of the Zhou
$e$-reducedness.

\section{Some properties of the Zhou radical}
We begin with the equivalent conditions for $\delta(R)$ of  a ring
$R$ and the submodule $\delta(M)$ of a module $M$ which are
mentioned and proved in \cite{Zh}. The Zhou radical plays a
crucial role as a tool in studying the structure of some classes
of rings and modules. In \cite{HKB}, duo property of rings
investigated by using properties of the Zhou radical. In this
section, some results are mentioned and proved to use in the
sequel of the paper.  Lemma \ref{besli} and Lemma \ref{mod} are
studied in \cite{Zh}.

\begin{lem}\label{besli} Given a ring $R$, each of the following sets is equal to $\delta(R)$.\begin{enumerate} \item[(1)] $R_1$ = the intersection
of all essential maximal right ideals of $R$.
\item[(2)] $R_2$ = the unique largest $\delta$-small right ideal of $R$.
\item[(3)] $R_3 = \{x\in R\mid xR + K_R = R$ implies $K_R$ is a direct summand of $R_R\}$.
\item[(4)] $R_4 = \bigcap\{$ideals $P$ of $R\mid R/P$ has a faithful singular simple module$\}$.
\item[(5)] $R_5 = \{x\in R\mid$ for all $y\in R$, there exists a semisimple right ideal $Y$ of $R$ such that $(1 + xy)R\oplus Y = R_R\}$.
\end{enumerate}
\end{lem}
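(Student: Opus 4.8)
The plan is to treat $\delta(R)$, the sum of all $\delta$-small right ideals, as the central object and to prove that each $R_i$ coincides with it, rather than chasing a single long cycle of implications. I would first record two preliminary facts about $\delta$-small right ideals that serve as the workhorses: (i) a submodule of a $\delta$-small right ideal is again $\delta$-small, and a singular direct-summand quotient of $R_R$ is trivial, i.e. if $R = K \oplus L$ with $R/K$ singular then $L = 0$ (immediate, since $L = eR$ singular forces the complement $(1-e)R$ to be essential while meeting $eR$ trivially); and (ii) the refined characterization of $\delta$-smallness, namely that $N$ is $\delta$-small in $R_R$ exactly when every $K$ with $N + K = R$ admits a projective semisimple $Y \subseteq N$ with $R = K \oplus Y$. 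I would either reproduce (ii) or invoke it from \cite{Zh}, since it is the genuine technical input behind the complementation statements $R_3$ and $R_5$. I would also use the standard identification that, for a maximal right ideal $\mathfrak m$, the simple module $R/\mathfrak m$ is singular precisely when $\mathfrak m$ is essential, together with the fact that quotients of singular modules are singular.

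I would handle $R_1$ and $R_2$ first. For $\delta(R) \subseteq R_1$, take any $\delta$-small $N$ and any essential maximal right ideal $\mathfrak m$; if $N \not\subseteq \mathfrak m$ then $N + \mathfrak m = R$ with $R/\mathfrak m$ singular, so $\delta$-smallness forces $\mathfrak m = R$, a contradiction, whence $N \subseteq \mathfrak m$ and summing gives $\delta(R) \subseteq R_1$. For $R_1 \subseteq \delta(R)$ I would show $R_1$ is itself $\delta$-small: if $R_1 + X = R$ with $R/X$ singular but $X \neq R$, choose a maximal $\mathfrak m \supseteq X$; then $R/\mathfrak m$ is a singular simple quotient, so $\mathfrak m$ is essential, hence $R_1 \subseteq \mathfrak m$ and $R = R_1 + X \subseteq \mathfrak m$, a contradiction. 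Thus $R_1 = \delta(R)$, and since $R_1$ is $\delta$-small while every $\delta$-small ideal lies in the sum $\delta(R) = R_1$, this common ideal is the unique largest $\delta$-small right ideal, giving $R_2 = \delta(R)$.

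Next I would pass to the element-wise descriptions via the bridge $x \in \delta(R) \iff xR$ is $\delta$-small (using (i) and $x \in xR$). For $R_3$: if $xR$ is $\delta$-small then (ii) makes every $K$ with $xR + K = R$ a direct summand, so $\delta(R) \subseteq R_3$; conversely, if $x \in R_3$ and $xR + K = R$ with $R/K$ singular, then $R = K \oplus L$ with $L \cong R/K$ singular, so $L = 0$ by the summand fact and $xR$ is $\delta$-small, giving $R_3 \subseteq \delta(R)$. For $R_5$: if $x \in \delta(R)$ then for each $y$ the ideal $xyR \subseteq xR$ is $\delta$-small and $xyR + (1+xy)R = R$, so (ii) yields a semisimple $Y$ with $(1+xy)R \oplus Y = R$; conversely, if some essential maximal $\mathfrak m$ omits $x$, then $1 = xr + m$ lets me write $1 + x(-r) = m \in \mathfrak m$, and the semisimple complement $Y$ of $(1+x(-r))R \subseteq \mathfrak m$ would satisfy $\mathfrak m \cap Y$ essential in $Y$, hence $Y \subseteq \mathfrak m$ (an essential submodule of a semisimple module is everything), forcing $R \subseteq \mathfrak m$, which is impossible; so $x$ lies in every essential maximal right ideal and $R_5 = R_1 = \delta(R)$.

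The last equality $R_4 = \delta(R)$ is the one I expect to be the main obstacle. The idea is to rewrite $\delta(R) = R_1$ as the intersection $\bigcap_S \mathrm{ann}_R(S)$ over all singular simple right $R$-modules $S$, since each $\mathrm{ann}_R(S)$ is the intersection of the essential maximal right ideals $\mathrm{ann}(s)$, $0 \neq s \in S$. One inclusion is then clean: for any ideal $P$ with $R/P$ admitting a faithful singular simple module $S$ one has $\mathrm{ann}_R(S) = P$ and $\delta(R) \subseteq \mathrm{ann}_R(S) = P$, so $\delta(R) \subseteq R_4$. The reverse inclusion requires, for each singular simple $R$-module $S$, that $S$ be faithful and singular as a module over $R/\mathrm{ann}_R(S)$, so that $P = \mathrm{ann}_R(S)$ qualifies in the intersection defining $R_4$ and hence $R_4 \subseteq P$. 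Reconciling singularity over $R$ with singularity over the quotient $R/\mathrm{ann}_R(S)$ is the delicate point; if a direct comparison of the two singular submodules proves awkward, I would route the argument through the identity $J(R/\mathrm{Soc}(R_R)) = \delta(R)/\mathrm{Soc}(R_R)$ quoted from \cite{Zh}.
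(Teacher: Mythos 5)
The paper does not actually prove this lemma: it is quoted verbatim from Zhou's paper, with the sentence ``Lemma \ref{besli} and Lemma \ref{mod} are studied in \cite{Zh}'' serving as the entire justification. So there is no in-paper argument to compare against; what you have written is essentially a reconstruction of Zhou's own proofs. Your treatment of $R_1$, $R_2$, $R_3$ and $R_5$ is correct: the two preliminary facts are right (for the second one, note $eR$ singular means $\mathrm{ann}_r(e)=(1-e)R$ is essential, yet it meets $eR$ trivially, so $e=0$), the bridge $x\in\delta(R)\iff xR$ is $\delta$-small is the right pivot, and the arguments for each inclusion go through as you describe. The one caveat is that fact (ii) -- every $K$ with $N+K=R$ admits a projective semisimple complement inside $N$ -- is Zhou's Lemma~1.2 and is the entire technical content behind $R_3$ and $R_5$; invoking it is legitimate if the goal is to reduce to \cite{Zh}, but if you intend a self-contained proof you must prove it, and you have not.

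The genuine gap is in part (4), and you have correctly located it but not closed it. The issue is sharper than a ``delicate point'': if ``singular'' in the definition of $R_4$ is read as singularity over $R/P$, the statement is simply \emph{false} -- for $R=\mathbb{Z}$ no quotient $\mathbb{Z}/P$ has a faithful simple module that is singular over $\mathbb{Z}/P$ (the faithful case forces $P=p\mathbb{Z}$ and a field), so $R_4$ would be an empty intersection, namely $\mathbb{Z}$, while $\delta(\mathbb{Z})=0$. The intended reading, and the one under which the lemma holds, is singularity as an $R$-module. With that reading both inclusions are immediate from your identity $R_1=\bigcap_S\mathrm{ann}_R(S)$ over singular simple $R$-modules $S$: every qualifying $P$ is $\mathrm{ann}_R(S)$ for such an $S$ and conversely. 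It is worth recording that the transfer does work in one direction: if $P\subseteq\mathfrak{m}$ and $\mathfrak{m}/P$ is essential in $R/P$, then for any right ideal $I\not\subseteq P$ the modular law gives $(I\cap\mathfrak{m})+P=(I+P)\cap\mathfrak{m}\supsetneq P$, so $\mathfrak{m}$ is essential in $R$; it is the opposite direction (essential in $R$ does not descend to $R/\mathrm{ann}_R(S)$) that fails, which is exactly why the $R$-module reading is forced. Your proposed fallback via $J(R/\mathrm{Soc}(R_R))=\delta(R)/\mathrm{Soc}(R_R)$ does not obviously produce the $R_4$ description and should be dropped in favour of the above.
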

\begin{lem}\label{mod} Let $R$ be a ring. Then the following hold.
\begin{enumerate}
\item $(eRe)\cap \delta(R) = \delta(eRe) = e\delta(R)e$ for any $e\in $ Id$(R)$.
\item Let $\{M_i\}_{i\in I}$ be a family of $R$-modules. Then $\delta(\oplus_{i\in I} M_i) = \oplus_{i\in I}\delta(M_i)$.
\item Let $M$ and $N$ be $R$-modules. If $f \colon M\rightarrow N$ is a homomorphism, then $f(\delta(M))\subseteq \delta(N)$.
\end{enumerate}
\end{lem}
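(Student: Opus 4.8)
The three assertions are of rather different character, so I would establish (3) first, deduce (2) from it, and treat (1) on its own; the real work lies in (1).

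For (3), the plan is to reduce everything to the statement that module homomorphisms carry $\delta$-small submodules to $\delta$-small submodules. Concretely, let $L$ be $\delta$-small in $M$ and $f\colon M\to N$ a homomorphism, and aim to show $f(L)$ is $\delta$-small in $N$. Suppose $f(L)+K=N$ with $N/K$ singular for some submodule $K\le N$. A direct check shows $L+f^{-1}(K)=M$, and since $f(L)+K=N$ forces $f(M)+K=N$, one gets $M/f^{-1}(K)\cong N/K$, which is therefore singular. Then $\delta$-smallness of $L$ forces $f^{-1}(K)=M$, whence $K\supseteq f(M)\supseteq f(L)$ and $K=N$, as required. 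Because $\delta(M)$ is by definition the sum of all $\delta$-small submodules, writing $\delta(M)=\sum_\alpha L_\alpha$ yields $f(\delta(M))=\sum_\alpha f(L_\alpha)\subseteq\delta(N)$, each $f(L_\alpha)$ being $\delta$-small.

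For (2), I would feed (3) into the canonical maps of the direct sum. Applying (3) to each inclusion $\iota_j\colon M_j\hookrightarrow\oplus_i M_i$ gives $\delta(M_j)\subseteq\delta(\oplus_i M_i)$, and summing over $j$ yields $\oplus_i\delta(M_i)\subseteq\delta(\oplus_i M_i)$. For the reverse inclusion, applying (3) to each projection $\pi_j\colon\oplus_i M_i\to M_j$ gives $\pi_j(\delta(\oplus_i M_i))\subseteq\delta(M_j)$; since any $x\in\delta(\oplus_i M_i)$ has finite support and each component $\pi_j(x)$ lands in $\delta(M_j)$, we conclude $x\in\oplus_i\delta(M_i)$.

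Part (1) is where I expect the difficulty. The equality $\delta(R)\cap eRe=e\delta(R)e$ is formal: $\delta(R)$ is an ideal, so $e\delta(R)e\subseteq\delta(R)\cap eRe$, while any $x\in\delta(R)\cap eRe$ satisfies $x=exe\in e\delta(R)e$. The substantive identity is $\delta(eRe)=e\delta(R)e$. Here the plan is to exploit the Peirce decomposition $R_R=eR\oplus(1-e)R$: part (2) gives $\delta(R)=\delta(eR)\oplus\delta((1-e)R)$, and multiplying on the left by $e$ identifies $\delta(eR_R)=e\delta(R)$, hence $e\delta(R)e=\delta(eR_R)e$. The crux is then to match the Zhou radical of $eR$ as a right $R$-module against the Zhou radical of $eRe$ as a right module over the corner ring $eRe$, i.e.\ to prove $\delta((eRe)_{eRe})=\delta(eR_R)\,e$. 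This transfer between the two module categories is the main obstacle, and I would handle it through one of the equivalent descriptions in Lemma~\ref{besli}---most plausibly the direct-summand characterization (3) or the quasiregular-type characterization (5)---since both direct summands and semisimple right ideals are detected compatibly under the functor relating $R$-modules and $eRe$-modules, which should let $\delta$-smallness computed over $R$ be reconciled with $\delta$-smallness computed over $eRe$.
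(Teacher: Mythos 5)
The paper itself gives no proof of this lemma (it is attributed to \cite{Zh}), so there is no in-paper argument to compare against; I am judging your proposal on its own merits. Your treatments of (3) and (2) are correct and complete: the verification that a homomorphic image of a $\delta$-small submodule is $\delta$-small (via $L+f^{-1}(K)=M$ and $M/f^{-1}(K)\cong N/K$) is exactly the standard argument, and deducing (2) from (3) through the inclusions and projections of the direct sum is sound. Likewise, in (1) the equality $\delta(R)\cap eRe=e\delta(R)e$ is correctly dispatched (using that $\delta(R)$ is a two-sided ideal, which follows from Lemma~\ref{besli}(4)), and the reduction $e\delta(R)e=\delta(eR_R)e$ via the Peirce decomposition and part (2) is fine.

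The genuine gap is the step you yourself flag as ``the crux'': $\delta\bigl((eRe)_{eRe}\bigr)=\delta(eR_R)e$. You do not prove it; you only say you ``would handle it'' through Lemma~\ref{besli}(3) or (5) because direct summands and semisimple ideals ``should'' be detected compatibly under the functor $M\mapsto Me$. That is precisely where the difficulty sits, and the plan as stated does not confront it. The obstruction is that simplicity, semisimplicity, singularity and essentiality over $eRe$ do not automatically correspond to the same notions over $R$ unless $ReR=R$: an $eRe$-submodule of $eRe$ need not arise as $Ke$ for an $R$-submodule $K\le eR$ with matching essentiality, a semisimple right ideal of $eRe$ need not come from a semisimple right ideal of $R$, and the singular quotients entering the definition of $\delta$-smallness are computed in two different module categories. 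Any honest proof must either carry out this transfer explicitly (e.g.\ by relating $\mathrm{Soc}$ and $J$ of the corner ring to those of $R$ and invoking $J(R/\mathrm{Soc}(R_R))=\delta(R)/\mathrm{Soc}(R_R)$ from \cite[Corollary 1.7]{Zh}, together with the classical $J(eRe)=eJ(R)e$), or run a quasi-regularity-style computation with characterization (5) in which the semisimple complements are matched between $R$ and $eRe$ by hand. As written, the central assertion of part (1) is asserted rather than proved.
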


We start with some examples of the Zhou radical  and nilpotents of
some rings.

\begin{ex}\label{ilk}{\em Let $R$ be a ring and $n$ a positive
integer.
\begin{enumerate}
\item[(1)]
$\delta(U_2(R)) =
\delta\left(\begin{bmatrix}R&R\\0&0\end{bmatrix}\right)+\delta
\left(
\begin{bmatrix}0&0\\0&R\end{bmatrix}\right)=
\delta\left(\begin{bmatrix}R&R\\0&0\end{bmatrix}\right)+\begin{bmatrix}0&0\\0&\delta(R)\end{bmatrix}$,
$$\delta(U_3(R)) =
\delta\left(\begin{bmatrix}R&R&R\\0&0&0\\0&0&0\end{bmatrix}\right)
+
\delta\left(\begin{bmatrix}0&0&0\\0&R&R\\0&0&0\end{bmatrix}\right)
+\begin{bmatrix}0&0&0\\0&0&0\\0&0&\delta(R)\end{bmatrix}$$
\noindent \hspace*{-0.8cm} and $\delta(M_n(R)) = M_n(\delta(R))$.
\item[(2)]  $N(U_2(R)) = \begin{bmatrix}N(R)&R\\0&N(R)\end{bmatrix}$ and $N(U_3(R)) = \begin{bmatrix}N(R)&R&R\\0&N(R)&R\\0&0&N(R)\end{bmatrix}$.
\end{enumerate}
Let $D$ be a division ring. Then we have the following.
\begin{enumerate}
\item[(3)] $\delta(U_2(D)) = \begin{bmatrix}0&D\\0&D\end{bmatrix}$ and $\delta(U_3(D)) =
\begin{bmatrix}0&D&D\\0&0&D\\0&0&D\end{bmatrix}$.
\item[(4)] $N(U_2(D)) = \begin{bmatrix}0&D\\0&0\end{bmatrix}$ and
 $N(U_3(D)) = \begin{bmatrix}0&D&D\\0&0&D\\0&0&0\end{bmatrix}$.
\end{enumerate}}
\end{ex}

\begin{ex}\label{orn}\em Let $F$ be a field and $A = \mathbb Z_2<a, b>$ the free algebra with noncommuting indeterminates $a$, $b$ over $F$. Let $I$ be
the ideal of $A$ generated by $aAb$, $a^2 - a$ and $b^2 - b$.
Consider the ring $R = A/I$ and identify the elements in $A$ with
their images in $R$ for simplicity. It is easily checked that
\begin{center} $R = \{0, 1, a, b, ba, a + b, a + ba, b + ba, a + b + ba, 1
+ a, 1 + b, 1 + ba, 1 + a + b,\linebreak 1 + a + ba, 1 + b + ba, 1
+ a + b + ba\}$.\end{center} Then $aR =\{0, a\}$, $(ba)R = \{0,
ba\}$, $(1 + a + b + ba)R = \{0, 1 + a + b + ba\}$ and $(a + ba)R
= \{0, a + ba\}$ are minimal right ideals of $R$. It follows that
$Soc(R) = aR\oplus (ba)R\oplus (1 + a + b + ba)R\oplus (a + ba)R$,
and $Soc(R)= \delta(R)$ is the Zhou radical of $R$. Next we
determine the Zhou radical $\delta(U_2(R))$ of $U_2(R)$. It is
obvious that $\begin{bmatrix} Soc(R)&R\\0&0\end{bmatrix}$ is the
unique maximal essential right ideal in the first row. So the Zhou
radical of the first row is
$\begin{bmatrix}Soc(R)&R\\0&0\end{bmatrix}$. Then $\delta(U_2(R))
=
\begin{bmatrix}\delta(R)&R\\0&\delta(R)\end{bmatrix}$.
\end{ex}

Note that $J(R/J(R)) = 0$ for a ring $R$. One may suspect whether
$\delta(R/\delta(R)) = 0$. But there are rings which erase the
possibility as shown below.

\begin{ex}\rm Let $F$ be a field and consider the ring $R = U_2(F)$.
By the preceding example, $R/\delta(R) \cong
\begin{bmatrix}F&0\\0&0\end{bmatrix}$. Hence $\delta(R/\delta(R))
\cong \delta\left(\begin{bmatrix}F&0\\0&0\end{bmatrix}\right)=
\begin{bmatrix}F&0\\0&0\end{bmatrix}$.
\end{ex}
\begin{prop}\label{semi} For any ring $R$, $\delta(R)$ is a semiprime ideal.
\end{prop}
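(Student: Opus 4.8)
The plan is to use the description of $\delta(R)$ as an intersection of certain ideals and to verify that each of those ideals is prime; since any intersection of prime ideals is semiprime, this will finish the proof. Concretely, by Lemma~\ref{besli}(4) we have $\delta(R) = R_4 = \bigcap\{\text{ideals } P \text{ of } R \mid R/P \text{ has a faithful singular simple module}\}$, so it suffices to prove that every ideal $P$ appearing in this intersection is a prime ideal of $R$.

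Fix such a $P$ and let $M$ be a faithful singular simple right $(R/P)$-module. I would first recall that an ideal $P$ is prime precisely when $R/P$ is a prime ring, that is, when the product of two nonzero ideals of $R/P$ is again nonzero. Writing $S = R/P$ and forgetting the singularity of $M$ (which is not needed for this step), the existence of a faithful simple module makes $S$ a right primitive ring. For any nonzero ideal $A$ of $S$, faithfulness of $M$ forces $MA \neq 0$, and since $M$ is simple this yields $MA = M$. Hence if $A$ and $B$ are nonzero ideals of $S$, then $M(AB) = (MA)B = MB = M \neq 0$, so $AB \neq 0$. Therefore $S$ is prime and $P$ is a prime ideal of $R$.

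Applying this to every $P$ in the intersection, $\delta(R)$ is realized as an intersection of prime ideals and is therefore a semiprime ideal. I expect the only delicate point to be the clean identification of $\delta(R)$ with the ideal $R_4$ of Lemma~\ref{besli}(4); once that characterization is in hand, the primeness of each $P$ follows from the standard fact that a ring admitting a faithful simple module is primitive, hence prime. An alternative route, avoiding the intersection description, would be to verify the elementwise criterion directly: assuming $aRa \subseteq \delta(R)$ and using the essential-maximal-right-ideal description $R_1$ of $\delta(R)$, show that $a \in \delta(R)$. I would fall back on this only if the module-theoretic argument turned out to require additional hypotheses.
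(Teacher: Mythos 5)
Your argument is correct, but it takes a genuinely different route from the paper. The paper works entirely with the description of $\delta(R)$ as the intersection of the essential maximal right ideals (Lemma~\ref{besli}(1)) and verifies the elementwise criterion directly: assuming $aRa\subseteq\delta(R)$ and $a\notin\delta(R)$, it picks an essential maximal right ideal $I$ with $a\notin I$, writes $1=ar+y$ with $y\in I$, and derives the contradiction $a=ara+ya\in I$ --- exactly the fallback you sketch in your last sentences. You instead invoke the characterization $\delta(R)=R_4$ of Lemma~\ref{besli}(4), note that each ideal $P$ in that intersection makes $R/P$ a right primitive (hence prime) ring, and conclude via the standard fact that an intersection of prime ideals is semiprime; the primitivity-implies-prime step ($M(AB)=(MA)B=MB=M\neq 0$) is carried out correctly. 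Your approach buys slightly more information --- it exhibits $\delta(R)$ as an intersection of prime (indeed primitive) ideals, not merely as an ideal satisfying the semiprimeness test --- at the cost of leaning on the module-theoretic characterization quoted from Zhou, whereas the paper's computation is shorter and entirely self-contained. The only point worth a word in either treatment is the degenerate case $\delta(R)=R$ (e.g.\ $R$ semisimple), where the defining family is empty; both arguments then hold vacuously under the elementwise convention for semiprimeness that the paper evidently adopts.
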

\begin{proof} Let $a\in R$ and assume that $aRa\subseteq \delta(R)$ but $a\notin \delta(R)$. Then there exists an essential maximal right ideal $I$ of $R$ such that $a\notin I$. Then $aR + I = R$. So there exist $r\in R$ and $y\in I$ such that $1 = ar + y$. Then we get $a = ara + ya$. It yields $a\in I$ since $ara\in \delta(R)\subseteq I$ and $ya\in I$. This is a contradiction. So the result follows.
\end{proof}

The following result is used in the sequel. It is probably well
known. However, no reference is fixed to refer and so we give a
proof for the sake of completeness.

\begin{lem}\label{delI} Let $R$ be a ring and $I$ an ideal of $R$. Then $\delta(I)\subseteq I\cap
\delta(R)$. The reverse inclusion holds if $I$ is maximal.
\end{lem}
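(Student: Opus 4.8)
The plan is to treat the two inclusions separately, with the forward one routine and the reverse one carrying all the weight. For $\delta(I)\subseteq I\cap\delta(R)$, the containment $\delta(I)\subseteq I$ is immediate because $\delta(I)$ is by definition a submodule of $I$. For $\delta(I)\subseteq\delta(R)$ I would simply apply Lemma~\ref{mod}(3) to the inclusion $\iota\colon I\hookrightarrow R$, which is a homomorphism of right $R$-modules; this gives $\iota(\delta(I))=\delta(I)\subseteq\delta(R)$. Intersecting the two containments yields the forward inclusion, and no use of maximality is needed here.

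For the reverse inclusion under the hypothesis that $I$ is maximal, the idea is to compute both radicals as intersections of essential maximal right ideals via Lemma~\ref{besli}(1) and to match them up. I would first record the easy half of a correspondence: if $M$ is an essential maximal right ideal of $R$ with $I\not\subseteq M$, then $M+I=R$ (as $M$ is a maximal right ideal), so $I/(M\cap I)\cong R/M$ is simple and singular, whence $M\cap I$ is a maximal submodule of $I$ with singular quotient; moreover it is essential in $I$, since the intersection of an essential submodule of $R_R$ with the submodule $I$ is essential in $I$. The goal is then the reverse matching: given an essential maximal right ideal $N$ of $I$, produce an essential maximal right ideal $M$ of $R$ with $M\cap I=N$. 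Once this is available, for any $x\in I\cap\delta(R)$ and any such $N$ we get $x\in I\cap M=N$, and intersecting over all $N$ gives $x\in\delta(I)$.

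The construction of $M$ is where I expect the real difficulty, and where maximality of $I$ must enter. Starting from $N$, with $I/N$ simple, I would use Zorn's Lemma to pick a right ideal $M\supseteq N$ maximal with respect to $M\cap I=N$ (equivalently $I\not\subseteq M$). By this maximality, every right ideal strictly containing $M$ meets $I$ outside $N$ and hence contains $I$; thus $(M+I)/M\cong I/N$ is the unique minimal submodule of $R/M$ and is essential in $R/M$. The crux is to promote $M$ to an actual maximal right ideal, that is, to show $M+I=R$, equivalently that the simple module $I/N$ is a direct summand of $R/N$. This is exactly the step that should exploit that $I$ is a maximal \emph{ideal}: since $R/I$ is a simple ring, the submodule structure of $R/N$ lying above $I/N$ is tightly constrained, and I would argue that $R/(M+I)\cong(R/M)/\bigl((M+I)/M\bigr)$ is forced to vanish. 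Granting $M+I=R$, one gets $R/M\cong I/N$ simple and singular, so $M$ is an essential maximal right ideal of $R$ with $M\cap I=N$, completing the matching. Verifying $M+I=R$ (equivalently, the splitting of $I/N$ in $R/N$) is the main obstacle; everything else is bookkeeping with Lemma~\ref{besli}(1) and the stability of essentiality under intersection with a submodule.
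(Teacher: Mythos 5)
Your first inclusion is fine and is exactly the paper's argument: $\delta(I)\subseteq I$ by definition, and $\delta(I)\subseteq\delta(R)$ by Lemma~\ref{mod}(3) applied to the inclusion $I\hookrightarrow R$. For the reverse inclusion your plan has a genuine gap, and it sits precisely where you flag it: you never establish $M+I=R$, i.e.\ that a maximal submodule $N$ of $I$ with singular quotient extends to an essential maximal right ideal $M$ of $R$ with $M\cap I=N$; you only assert that simplicity of $R/I$ ``should'' force $R/(M+I)$ to vanish. It does not: $(M+I)/I$ is merely a right ideal of the simple ring $R/I$, and simple rings have many proper nonzero right ideals. Concretely, take $R=M_2(\mathbb Z_4)$ and $I=2R$, a maximal ideal that is essential as a right ideal. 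Every maximal right ideal $M$ of $R$ contains $J(R)=2R=I$, so $M\cap I=I$ always, and no maximal right ideal induces a proper $N$; the correspondence you want simply does not exist. A second, subtler problem is your appeal to Lemma~\ref{besli}(1) for the module $I$: that lemma computes $\delta$ of the ring $R$, not of a right $R$-module. For a module, $\delta(I)$ is the reject of the singular simple modules, i.e.\ the intersection of the submodules $N$ with $I/N$ singular simple, and such $N$ need not be essential in $I$ (in the example above $I_R$ is semisimple singular and has no essential maximal submodules at all), so even a successful matching of essential maximal submodules would not reach $\delta(I)$.

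The paper argues the reverse inclusion quite differently: it splits into cases according to whether the maximal ideal $I$ is essential as a right ideal. If $I$ is not essential, it writes $R=I\oplus K$ and invokes $\delta(I)\oplus\delta(K)=\delta(R)$ from Lemma~\ref{mod}(2), which gives $\delta(I)=I\cap\delta(R)$ at once; if $I$ is essential, it asserts $\delta(R)\subseteq I$ and concludes $\delta(R)\subseteq\delta(I)$. You should be aware that the obstacle you ran into is real and not an artifact of your method: the essential case is exactly where the difficulty lives, and the same example $R=M_2(\mathbb Z_4)$, $I=2R$ shows that the passage from $\delta(R)\subseteq I$ to $\delta(R)\subseteq\delta(I)$ also needs justification, since there $I\cap\delta(R)=I$ while $\delta(I_R)=0$. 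So your instinct about where the weight of the proof lies is sound, but as written the proposal does not close the reverse inclusion, and the route you chose cannot be completed in the stated generality.
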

\begin{proof} By Lemma \ref{mod}(3), $\delta(I)\subseteq \delta(R)$.
Then $\delta(I)\subseteq I\cap \delta(R)$. For the reverse
inclusion, assume that $I$ is maximal. Consider the following
cases:\\
Case (1) If $I$ is essential, then $\delta(R)\subseteq I$. So $\delta(R)\subseteq \delta(I)$.\\
Case (2) If $I$ is not essential, then there exists an ideal $K$
of $R$ such that $I\oplus K = R$. This entails $\delta(I) \oplus
\delta(K) = \delta(R)$. In this case, $\delta(I) = I\cap
\delta(R)$.
\end{proof}

One can ask that for a ring $R$ and its essential ideal $I$ which
is not maximal, $I\cap \delta(R)\subseteq \delta(I)$ always hold,
but the next example shows that this inclusion need not be true in
general.

\begin{ex}\rm Consider the ring $R=\Bbb Z_{16}$ and its ideal $I=\overline{4}\Bbb
Z_{16}$. On the one hand, $I$ is essential but not maximal in $R$.
On the other hand, $\delta(R)=\overline{2}\Bbb Z_{16}$ and
$\delta(I)=\overline{8}\Bbb Z_{16}$. Then $I\cap
\delta(R)=\overline{4}\Bbb Z_{16}=I$, and so $I\cap \delta(R)$ is
not contained in $\delta(I)$.
\end{ex}

In \cite{Do}, Dorroh gave a way to embed a ring $R$ without an
identity into a ring with an identity $\Bbb Z\oplus R$, which is
called a Dorroh extension of $R$. In ring theory, Dorroh extension
has become an important method of constructing new rings and
analyzing properties of rings. Let $R$ be a ring and $T$ be an
associative ring that may not possess an identity and an $(R,
R)$-bimodule obeying multiplication in $T$, that is, for any $a\in
R$ and $t$, $s\in T$, $a(ts) = (at)s$, $t(as) = (ta)s$ and $(ts)a
= t(sa)$. The {\it Dorroh extension} (in other words, {\it ideal
extension}) of $T$ by $R$, denoted by $D(R, T)$, is the abelian
group $R\times T$ with multiplication defined by $(a_1, t_1)(a_2,
t_2) = (a_1a_2, a_1t_2 + t_1a_2 + t_1t_2)$ for $a_1$, $a_2\in R$
and $t_1$, $t_2\in T$. Note that $(1, 0)$ is the identity of $D(R,
T)$. Mesyan in \cite[Proposition 5]{Mes} characterized maximal
ideals and right (or left) ideals of Dorroh extensions. In this
case a map $\varphi \colon T\rightarrow R$ is said to be an
$R$-homomorphism provided it is a ring homomorphism that is also
an $(R, R)$-bimodule homomorphism. Maximal right ideals and the
Zhou radical $\delta(D(R, T))$ are characterized as in the
following. See \cite[Proposition 5]{Mes} for details.
\begin{lem}\label{IR} Let $T$ be an algebra over a ring $R$. Consider
the Dorroh extension $D(R, T)$ of $T$ by $R$ and let $K = \{(a,
-t) \mid a\in R, t\in T, ~ a -\varphi(t)\in Z\}$ be a maximal
right ideal in $D(R, T)$, where $Z\subseteq A$ is a maximal right
ideal of $R$, $J$ is an $R$-subring of $T$, and $\varphi:
J\rightarrow A/Z$ is a surjective $R$-homomorphism such that for
all $(a, -j)\in K$ and $i\in T$ the following are satisfied
\begin{enumerate}
\item[(a)] $ai - ji \in$ ker$(\varphi)$,
\item[(b)] $ia - ij\in$ ker$(\varphi)$.
\end{enumerate}
Then we have the following.
\begin{enumerate}
\item [(1)] If $\varphi(T)\subseteq Z$, then $K$ has the form $K = A\oplus T$ for some maximal right ideal $A$ of $R$.
\item [(2)] If $\varphi(T)\nsubseteqq Z$, then $K\subseteq A\oplus T$ for some maximal right ideal $A$ of $R$.
\item [(3)] $\delta(D(R, T)) = \delta(R)\oplus T$.
\item [(4)] Let $(a, t)\in D(R, T)$. Then $(a, t)\in$ Id$(D(R, T))$ if and only if $a\in$ Id$(R)$ and $(a + t)^2 = (a + t)$.
\item [(5)] Let $(a, t)\in D(R, T)$. Then $(a, t)\in N(D(R, T))$ with $(a, t)^n = 0$ if and only if
$a^n = 0$ and $(a + t)^n = 0$.
\end{enumerate}
\end{lem}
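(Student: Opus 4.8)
The statement splits naturally into the classification of maximal right ideals, parts (1)--(2), and the computations in parts (3)--(5). For (1) and (2) I would simply quote Mesyan's description of the maximal right ideals of a Dorroh extension in \cite[Proposition 5]{Mes}: each maximal right ideal $K$ lies in $A\oplus T$ for some maximal right ideal $A$ of $R$, with $K=A\oplus T$ precisely when $\varphi(T)\subseteq Z$. Nothing new is needed here, so the real work is in (3), (4), (5).

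For (4) I would expand $(a,t)^2=(a^2,\,at+ta+t^2)$ directly from the Dorroh product; hence $(a,t)$ is idempotent iff $a^2=a$ and $at+ta+t^2=t$, and once $a^2=a$ is known the second equation is exactly $(a+t)^2=a+t$. For (5) I would prove by induction the identity $(a,t)^n=(a^n,\,(a+t)^n-a^n)$, the inductive step being a single expansion using $(a+t)^n(a+t)=(a+t)^{n+1}$; the conditions $a^n=0$ and $(a+t)^n=0$ then read off at once. Conceptually both facts record that $(a,t)\mapsto a$ and $(a,t)\mapsto a+t$ are the two natural ring homomorphisms out of $D(R,T)$ and that they jointly separate points, so neither requires anything beyond the definition of the multiplication.

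Part (3) is the crux. By Lemma \ref{besli}(1), $\delta(D(R,T))$ is the intersection of all essential maximal right ideals, so the task is to identify these among Mesyan's list and intersect them. The plan has two steps. First, show $T\subseteq\delta(D(R,T))$, i.e.\ that $T$ lies in every essential maximal right ideal; the type (1) ideals $A\oplus T$ contain $T$ automatically, so the content is to rule out the type (2) ideals (those with $\varphi(T)\nsubseteq Z$ and $K\subsetneq A\oplus T$) as essential. Second, establish the correspondence $A\leftrightarrow A\oplus T$ between essential maximal right ideals of $R$ and of $D(R,T)$ through the isomorphism $D(R,T)/T\cong R$ afforded by $(a,t)\mapsto a$; the direction ``$A$ essential $\Rightarrow A\oplus T$ essential'' is a short lifting argument, since for any nonzero right ideal $L$ either $L\subseteq T\subseteq A\oplus T$, or the image of $L$ in $R$ meets $A$ and any lift of a nonzero element of that intersection lands in $L\cap(A\oplus T)$. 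Granting both steps, $\delta(D(R,T))=\bigcap_A(A\oplus T)=\big(\bigcap_A A\big)\oplus T=\delta(R)\oplus T$, the middle equality being the module decomposition in Lemma \ref{mod}(2).

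The main obstacle I expect is the first step of part (3): showing that no type (2) maximal right ideal is essential, equivalently that $T\subseteq\delta(D(R,T))$. Such an ideal need not contain $T$, and whether it is essential is governed by how $T$ multiplies inside $D(R,T)$, so here one must exploit Mesyan's explicit data (the ideal $Z$, the $R$-subring $J$, and the surjection $\varphi\colon J\to A/Z$) rather than a purely formal manipulation. This is the only place where the fine structure of the Dorroh construction, and the hypotheses on $T$ as an $R$-algebra, genuinely enter the argument.
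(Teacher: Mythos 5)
For parts (4) and (5) your argument coincides with the paper's: the paper expands $(a,t)^2=(a^2,at+ta+t^2)$ and reads off the idempotent condition, and for nilpotency it iterates the same expansion (``in this way we may continue''), which is exactly your induction on $(a,t)^n=(a^n,(a+t)^n-a^n)$. For parts (1)--(3) the paper gives no argument at all; it writes ``(1), (2) and (3) are known by [HKB, Lemma 2.8]'', so your citation of Mesyan for (1)--(2) is in the same spirit and is fine.

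The issue is part (3), where you attempt a genuine proof and correctly identify, but do not close, the pivotal step: that $T\subseteq\delta(D(R,T))$, equivalently that no ``type (2)'' maximal right ideal is essential. This is a real gap, and in the generality in which the lemma is stated it cannot be closed: take $R=T=\mathbb Z$. Then $(a,t)\mapsto(a,a+t)$ is a ring isomorphism $D(\mathbb Z,\mathbb Z)\cong\mathbb Z\times\mathbb Z$, so $\delta(D(\mathbb Z,\mathbb Z))=0$, whereas the claimed formula gives $0\oplus\mathbb Z$; concretely, $\{(a,t)\mid a+t\in p\mathbb Z\}$ is an essential maximal right ideal of type (2) that does not contain $T$. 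So the first half of your plan for (3) genuinely depends on whatever extra hypotheses on $T$ are carried by the cited [HKB, Lemma 2.8] (e.g.\ conditions forcing $T$ to be $\delta$-small as a right ideal of $D(R,T)$), and cannot be supplied by a formal manipulation of the Dorroh product. The second half of your plan --- the correspondence $A\leftrightarrow A\oplus T$ between essential maximal right ideals of $R$ and of $D(R,T)$ once $T$ is known to lie in every essential maximal right ideal, followed by intersecting --- is sound. In short: (4)--(5) match the paper; (1)--(2) are citations in both; (3) is unproved in your write-up, and also unproved (merely cited) in the paper.
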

\begin{proof} (1), (2) and (3) are known by \cite[Lemma 2.8]{HKB}.\\
(4) Let $(a, t)\in D(R, T)$. Then $(a, t)\in$ Id$(D(R, T))$ if and
only if $a^2 = a$ and $at + ta + t^2 = t$ if and only if  $a^2 =
a$ and $(a + t)^2 = a + t$.\\(5) Let $(a, t)\in D(R, T)$. Then
$(a, t)\in N(D(R, T))$ if and only if $(a, t)^2 = 0$ if and only
if $a^2 = 0$ and $at + ta + t^2 = 0$ if and only if $a^2 = 0$ and
$(a + t)^2 = 0$. In this way we may continue to reach $(a, t)^n =
0$ if and only if $a^n = 0$ and $(a + t)^n = 0$ for each positive
integer $n$.
\end{proof}


We now illustrate the Zhou radical of a Dorroh extension with an
example.

\begin{ex}\label{Me} {\rm Consider the rings $R = U_2(F)$ and $T =
M_2(F)$ for a field $F$. It is obvious that  $\delta(R) =
\begin{bmatrix}0&F\\0&F\end{bmatrix}$ and $\delta(T) = T$. Also,
$\delta(D(R, T)) = \left(\begin{bmatrix}0&F\\0&F\end{bmatrix},
\begin{bmatrix}F&F\\F&F\end{bmatrix}\right)$.}
\end{ex}

Let $R$ be a ring and $S$ a multiplicatively closed subset of $R$
consisting of the identity 1 and some central regular elements,
that is, for any element $s\in S$ and $r\in R$, $sr = 0$ implies
that $r = 0$. Consider the ring $S^{-1}R = \{s^{-1}r \mid s\in S,
r\in R\}$.
\begin{lem}\label{del} Let $R$ be a ring and $S$ a multiplicatively closed subset of $R$ consisting of the identity 1 and some central regular elements. Then $S^{-1}\delta(R)\subseteq \delta(S^{-1}R)$.
\end{lem}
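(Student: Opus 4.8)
The plan is to reduce everything to the single statement $\phi(\delta(R))\subseteq \delta(S^{-1}R)$, where $\phi\colon R\to S^{-1}R$, $\phi(r)=r/1$, is the canonical map. Since the elements of $S$ are regular, $\phi$ is injective and each $s\in S$ becomes a central unit $s/1$ of $S^{-1}R$. Because $\delta(S^{-1}R)$ is a two-sided ideal (it is semiprime by Proposition \ref{semi}, or by Lemma \ref{besli}(4) an intersection of ideals), once $\phi(x)\in\delta(S^{-1}R)$ is known for $x\in\delta(R)$, the typical generator $s^{-1}x=(s/1)^{-1}\phi(x)$ of $S^{-1}\delta(R)$ again lies in $\delta(S^{-1}R)$. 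So it suffices to show $\phi(\delta(R))\subseteq\delta(S^{-1}R)$.

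To do this I would use the characterization $\delta(S^{-1}R)=R_1$ of Lemma \ref{besli}: fix an essential maximal right ideal $M$ of $S^{-1}R$ and prove $x/1\in M$, i.e. $x\in I:=M\cap R$, for every $x\in\delta(R)$. Here the first genuine input is that contraction of essential right ideals along this central localization is again essential: given $0\neq r\in R$ and $E$ essential in $S^{-1}R$, essentiality yields some nonzero $(ra)/s\in E$, and then $(ra)/1=\bigl((ra)/s\bigr)(s/1)\in E$ shows $0\neq ra\in rR\cap E^{c}$. Applying this to $M$ (and to annihilators of elements) shows $I=M\cap R$ is essential and that $V:=(S^{-1}R)/M$, being a singular simple right $S^{-1}R$-module, is singular as a right $R$-module as well.

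The decisive observation is that $V\delta(R)$ is an $S^{-1}R$-submodule of $V$: since $\delta(R)$ is a two-sided ideal of $R$ and each $s\in S$ is central, $\bigl(V\delta(R)\bigr)s^{-1}=V\bigl(s^{-1}\delta(R)\bigr)=\bigl(Vs^{-1}\bigr)\delta(R)\subseteq V\delta(R)$. As $V$ is simple over $S^{-1}R$, this forces $V\delta(R)=0$ or $V\delta(R)=V$. In the first case $\delta(R)\subseteq\mathrm{Ann}_R(V)\subseteq \mathrm{ann}_R(\overline{1})=I$, and we are done, since then $x/1\in M$. So the whole matter comes down to excluding $V\delta(R)=V$, and for this I would use the annihilator form of Lemma \ref{besli}(4), namely $\delta(R)=\bigcap\{\mathrm{ann}_R(W)\mid W \text{ a singular simple right }R\text{-module}\}$, together with the facts just established that $V$ is singular over $R$ and that $S$ acts on $V$ by $R$-automorphisms, so that the $R$-module structure of $V$ is controlled by its singular cyclic subfactors $\overline{1}\cdot s^{-1}R$.

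I expect this exclusion to be the main obstacle. The difficulty is that $V$ need not be simple, or even semisimple or finitely generated, as an $R$-module (already $\mathbb{Q}$ over $\mathbb{Z}$ shows this), so the singular simple characterization of $\delta(R)$ cannot be applied to $V$ directly; one must reduce the action of $\delta(R)$ on $V$ to its action on the singular simple $R$-subquotients, using invertibility of $S$, to conclude $V\delta(R)=0$. An alternative route, via Lemma \ref{mod}(3) and showing that the extended ideal $S^{-1}\delta(R)$ is $\delta$-small in $S^{-1}R$, meets exactly the same essential difficulty: transporting a relation $S^{-1}\delta(R)+L=S^{-1}R$ with singular quotient down to $R$ recovers only $(\delta(R)+L^{c})\cap S\neq\emptyset$ rather than the full equality $\delta(R)+L^{c}=R$ needed to invoke $\delta$-smallness in $R$.
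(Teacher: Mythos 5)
Your opening reduction is sound, and it matches the paper's first move: it suffices to show $\varphi(\delta(R))\subseteq\delta(S^{-1}R)$ for the canonical map $\varphi(r)=r/1$, after which multiplying by the central units $s/1$ handles the general element $s^{-1}x$ of $S^{-1}\delta(R)$. But from that point on the proposal is not a proof: you correctly identify that, working with an essential maximal right ideal $M$ of $S^{-1}R$ and $V=(S^{-1}R)/M$, the whole question reduces to excluding $V\delta(R)=V$, and then you explicitly leave that case open. This is a genuine gap, and it is exactly where the content lies. The characterization in Lemma \ref{besli}(4) only gives that $\delta(R)$ annihilates singular \emph{simple} right $R$-modules; $V$ is singular over $R$ but in general far from simple or even finitely generated over $R$ (as your own example $\mathbb{Q}$ over $\mathbb{Z}$ shows), so $\delta(R)$ acting as zero on the singular simple $R$-subquotients of $V$ does not force $V\delta(R)=0$ --- it only forces $V\delta(R)$ into the relevant radicals of those subquotients. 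Your closing remark that the $\delta$-smallness route meets the same obstruction is accurate, but acknowledging an obstacle is not the same as overcoming it.

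For comparison, the paper sidesteps all of this with a one-line functoriality argument: it views $\varphi\colon R\to S^{-1}R$ as a homomorphism of right $R$-modules, applies Lemma \ref{mod}(3) to get $\varphi(\delta(R))\subseteq\delta(S^{-1}R)$, identifies $\delta(R)$ with its image, and then multiplies by the units $s^{-1}$ exactly as you propose. (That argument is itself terse --- it silently identifies the Zhou radical of $S^{-1}R$ as a right $R$-module with the Zhou radical of the ring $S^{-1}R$ --- but it is the route the paper takes, and it avoids ever having to analyse the action of $\delta(R)$ on $(S^{-1}R)/M$.) To complete your version you would need a genuine new ingredient, for instance a change-of-rings statement to the effect that $\delta(R)$ annihilates every singular simple right $S^{-1}R$-module; as written, the decisive step is missing.
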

\begin{proof} The natural map $\varphi$ from $R$ to $S^{-1}R$ defined by $\varphi(r) = 1^{-1}r$ is a ring homomorphism and we may identify $\delta(R)$ with $\varphi(\delta(R))$. So we have $\delta(R)\subseteq \delta(S^{-1}R)$. It entails that $S^{-1}\delta(R)\subseteq \delta(S^{-1}R)$.
\end{proof}

 There are rings $R$ and $S$ such that the inclusion $\delta(S^{-1}R)\subseteq S^{-1}\delta(R)$ need not hold in general.
\begin{ex}\label{local-ex}\em Let $R$ denote the ring of integers $\mathbb Z$ and $S = R\setminus (0)$. Then $S^{-1}R = \mathbb Q$ is the rational numbers. It is well known that $\delta(\mathbb Z) = 0$ and $\delta(\mathbb Q) = \mathbb Q$.
\end{ex}

\section{Zhou $e$-reduced rings}

In the context, the Zhou radical and idempotents are used to
determine the structure of the rings.  In this section, we combine
the Zhou radical with an idempotent $e$ of the ring to define Zhou
right (resp., left) $e$-reduced ring as follows. We start with an
example for a motivation as follows.

\begin{ex}\label{ilk}{\em Let $R$ be a reduced ring. Then we have
the following.
\begin{enumerate}
\item $N(U_2(R)) = \begin{bmatrix}0&R\\0&0\end{bmatrix}$ and
$N(U_3(R)) = \begin{bmatrix}0&R&R\\0&0&R\\0&0&0&\end{bmatrix}$.
\item $EN(U_2(R))\subseteq \delta(U_2(R))$ and $N(U_2(R))E\subseteq \delta(U_2(R))$ for each $E\in$ Id$(U_2(R))$.
\item $EN(U_3(R))\subseteq \delta(U_3(R))$ and $N(U_3(R))E\subseteq \delta(U_3(R))$ for each $E\in$ Id$(U_3(R))$.
\end{enumerate}
    }\end{ex}

Motivated by Example \ref{ilk}, we give the main definition of
this paper.
\begin{df}\em Let $R$ be a ring and $e\in $ Id$(R)$. Then  $R$ is called {\it  Zhou right {\rm (resp.,} left{\rm )} $e$-reduced} provided
that $N(R)e\subseteq \delta(R)$ (resp., $eN(R)\subseteq
\delta(R))$. A ring $R$ is called {\it Zhou $e$-reduced} if it is
both Zhou right $e$-reduced and Zhou left $e$-reduced.
\end{df}

It is clear that every ring is Zhou right (resp., left)
$0$-reduced. Also, a ring $R$ is Zhou right (equivalently, left)
$1$-reduced if and only if $N(R)\subseteq \delta(R)$. In the
sequel, we assume that $e\in$ Id$(R)\setminus \{0\}$. Obviously,
every $e$-reduced ring, every semicommutative ring and every local
ring is Zhou right $e$-reduced. We now give some sources for Zhou
right $e$-reduced rings.

\begin{prop}\label{lem}\begin{enumerate}
\item[(1)] Every central semicommutative ring is Zhou $e$-reduced.
\item[(2)] Every right $e$-semicommutative ring is Zhou right $e$-reduced.
\item[(3)] Every Zhou right $1$-reduced ring is Zhou right $e$-reduced.
\item[(4)] Every semisimple ring is Zhou  $e$-reduced.
\item[(5)] Every weakly symmetric ring is Zhou  $e$-reduced.
\item[(6)] Every weak symmetric ring is Zhou  $e$-reduced.
\item[(7)] Every $J$-reduced ring is Zhou  $e$-reduced.
\end{enumerate}
\end{prop}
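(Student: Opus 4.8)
The plan is to funnel as many parts as possible through the single inclusion $N(R)\subseteq\delta(R)$ (Zhou $1$-reducedness) and then quote part (3), relying on two facts about $\delta(R)$. First, by Lemma \ref{besli}(1) the radical $\delta(R)$ is the intersection of the \emph{essential} maximal right ideals, whereas $J(R)$ is the intersection of \emph{all} maximal right ideals; intersecting over a smaller family gives a larger set, so $J(R)\subseteq\delta(R)$. Second, $\delta(R)$ is a two-sided ideal, which already settles (3): if $N(R)\subseteq\delta(R)$ then $N(R)e\subseteq\delta(R)e\subseteq\delta(R)$, and symmetrically $eN(R)\subseteq\delta(R)$. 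I will use (3) repeatedly as the final reduction step.

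With this in hand the parts not really involving the idempotent are quick. For (4), a semisimple ring has $\mathrm{Soc}(R_R)=R$, and since $\mathrm{Soc}(R_R)\subseteq\delta(R)$ (by the identity $\delta(R)/\mathrm{Soc}(R_R)=J(R/\mathrm{Soc}(R_R))$ of \cite{Zh}) we get $\delta(R)=R$, so every product lies in $\delta(R)$. For (7), $J$-reducedness is exactly $N(R)\subseteq J(R)$, and with $J(R)\subseteq\delta(R)$ this gives $N(R)\subseteq\delta(R)$; now (3) and its left analogue finish. For (5) and (6) I would use that weakly symmetric and weak symmetric rings are NI, i.e.\ $N(R)$ is an ideal (a standard feature of these ``symmetric modulo nilpotents'' classes, cf.\ \cite{UKYH,OC}); being a nil ideal, $N(R)\subseteq J(R)\subseteq\delta(R)$, and (3) again yields both the right and left statements.

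The idempotent-sensitive cases are (1) and (2), where I would exploit the semiprimeness of $\delta(R)$ from Proposition \ref{semi} in the form of the criterion: to prove $x\in\delta(R)$ it suffices to prove $xRx\subseteq\delta(R)$. Thus for a nilpotent $a$ the goal becomes $(ae)R(ae)\subseteq\delta(R)$ (and $(ea)R(ea)\subseteq\delta(R)$ for the left half of (1)). The mechanism is transparent when $a^2=0$. In (2), right $e$-semicommutativity applied to $a\cdot a=0$ gives $aRae=0$; since $(ae)s(ae)=a(es)ae\in aRae$, we get $(ae)R(ae)=0\subseteq\delta(R)$. In (1), central semicommutativity applied to $a\cdot a=0$ makes each $ara$ central, and $(ara)^2=ara^2ra=0$ shows $ara$ is a central nilpotent, hence generates a nil ideal contained in $J(R)\subseteq\delta(R)$; as every element of $(ae)R(ae)$ has the form $(a(es)a)e$, we again land in $\delta(R)$. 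In both cases Proposition \ref{semi} returns $ae\in\delta(R)$ (and, by the symmetric computation, $ea\in\delta(R)$ for (1)).

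The main obstacle is passing from square-zero nilpotents to nilpotents of arbitrary index in (1) and (2): the relations furnished by the two hypotheses have the shape $a^kRa^{n-k}e=0$ with $k+(n-k)=n$, so for a single factor $ae$ on each side (total visible power $2<n$ when $n\ge 3$) no relation applies and $(ae)R(ae)$ need not vanish. I would attack this by induction on the nilpotency index $n$, using that $a^{n-1}$ is automatically square-zero, so $a^{n-1}e\in\delta(R)$ by the base case, and then descending along $a^{n-1}e,a^{n-2}e,\dots,ae$, at each stage feeding the memberships already obtained into the factorizations coming from $a^kRa^{n-k}e=0$ (respectively from the centrality of $a^kra^{n-k}$). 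Making this descent close up rigorously --- rather than the base case or the bookkeeping of (4)--(7) --- is where the genuine work lies; the one-sidedness of (2) simply mirrors the one-sidedness of right $e$-semicommutativity, while the left-hand assertions in (1) and in (4)--(7) follow by the symmetric argument since $\delta(R)$ is two-sided.
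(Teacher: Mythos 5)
Parts (3), (4) and (7) are fine and essentially identical to the paper's argument, and your reductions via $J(R)\subseteq\delta(R)$ and $\mathrm{Soc}(R_R)\subseteq\delta(R)$ are correct. For (5) and (6) you overclaim slightly: you invoke ``these classes are NI'' (additive closure of $N(R)$), which is not what the cited results give and is not needed. What the definitions actually yield is that for a nilpotent $a$ every element of $aR$ (resp.\ $Ra$, resp.\ $Rara$) is nilpotent, so these are nil one-sided ideals, hence lie in $J(R)\subseteq\delta(R)$; for (5) the paper then applies the semiprimeness of $\delta(R)$ (Proposition \ref{semi}) to $aRa\subseteq\delta(R)$ to get $a\in\delta(R)$. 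This is a repairable wobble, not a fatal one.

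The genuine gap is in (1) and (2), which are the only nontrivial parts, and you have identified it yourself without closing it: your semiprimeness criterion reduces the problem to $(ae)R(ae)\subseteq\delta(R)$, an expression of total degree $2$ in $a$, whereas the hypotheses only produce relations of total degree $n$ (the nilpotency index), so for $n\ge 3$ nothing applies. The proposed repair --- induct on $n$ and ``feed the memberships $a^{n-1}e,\dots\in\delta(R)$ back into the factorizations'' --- does not close up: right $e$-semicommutativity and central semicommutativity are triggered by actual zero products $xy=0$, not by $\delta$-membership, so knowing $a^{k}e\in\delta(R)$ for $k>1$ generates no new relations about $ae$. The paper's proof avoids semiprimeness here entirely and instead reaches total degree $n$ by taking an $n$-fold product of the one-sided ideal itself: for (2), iterating $a^{n-1}Rae=0$ gives $a^{n-2}(Rae)^2=0$ and eventually $(Rae)^n=0$, so $Rae$ is a nilpotent left ideal, hence $Rae\subseteq J(R)\subseteq\delta(R)$ and $ae\in\delta(R)$; for (1) a similar (longer) commuting computation yields $(Ra)^{n+3}=0$, so $a\in J(R)\subseteq\delta(R)$ and both $ae,ea\in\delta(R)$. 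You should replace the descent sketch with this ``nilpotent one-sided ideal'' argument; as it stands, (1) and (2) are not proved for nilpotency index $\ge 3$.
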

\begin{proof}
(1) Let $a^n = 0$ for some integer $n\geq 2$. Then
$a^{n-1}Ra\subseteq C(R)$. Commuting $a^{n-1}Ra$ with $Ra$, we get
$a^{n-1}(Ra)^2 = 0$. Since $R$ is central semicommutative,
$(a^{n-2}Ra)(Ra)^2\subseteq C(R)$. Commuting $a^{n-2}(Ra)^3$ with
$Ra$, we have $a^{n-2}(Ra)^4 = 0$. The ring $R$ being central
semicommutative implies $(a^{n-3}Ra)(Ra)^4\subseteq C(R)$.
Commuting $a^{n-3}(Ra)^5$ with $Ra$, we get $a^{n-4}(Ra)^6=0$.
Continuing in this way, it entails
 $a^{n-(n-2)}(Ra)^n = 0$. So $a^2(Ra)^n = 0$. As $R$ is
central semicommutative, $aRa(Ra)^n\subseteq C(R)$. Commuting
$aRa(Ra)^n$ with $Ra$, we get $(Ra)^{n+3} = 0$. So $Ra\subseteq
J(R)$. It follows that $ae\in \delta(R)$. Hence $R$ is  Zhou right
$e$-reduced.\\
(2) Assume  that $R$ is right $e$-semicommutative. Firstly, note
that $e$ is left semicentral, that is, $re = ere$ since $(1-e)re =
0$ for each $r\in R$. Let $a\in N(R)$ with nilpotency index $n$
for some $n\geq 2$. Since $R$ is right $e$-semicommutative,
$a^{n-1}Rae = 0$.  Continuing in this way we have
$a^{n-2}(Rae)(Rae) = 0$. By induction, $(Rae)^n = 0$. Since
$\delta(R)$ contains right or left nilpotent ideals,
$Rae\subseteq \delta(R)$. So $ae\in\delta(R)$.\\
(3) is clear by $N(R)\subseteq \delta(R)$ and  (4) is clear by the fact that if $R$ is semisimple, then $\delta(R) = R$.\\
(5) Let $R$ be a weakly symmetric ring and $a\in N(R)$. By
\cite[Theorem 2.17]{UKYH}, $Rara$ is a nil left ideal for each
$r\in R$. Then $Rara\subseteq \delta(R)$. Hence $aRa\subseteq
\delta(R)$. By Proposition \ref{semi},
$a\in \delta(R)$. Thus $ae, ea\in \delta(R)$.\\
(6) Let $a\in N(R)$. By \cite[Theorem 2.2]{HKU}, $ra\in N(R)$ and also $ar\in N(R)$ for each $r\in R$.
Since  nil right ideals and  nil left ideals are contained in $\delta(R)$, we have $ae, ea\in \delta(R)$.\\
(7) Let $a\in N(R)$. Since $R$ is $J$-reduced, $J(R)$ is an ideal
in $R$ and $J(R)\subseteq \delta(R)$, we have $ae, ea\in
\delta(R)$. So $R$ is Zhou $e$-reduced.
\end{proof}

We may produce many examples by Proposition \ref{lem} as follows.

\begin{exs}\label{Orn}{\em The following hold.
\begin{enumerate}
\item[(1)] Let $F$ be a field. Then $M_n(F)$ is Zhou right  $e$-reduced,
but neither central semicommutative nor $e$-semicommutative for
some $e\in$ Id$(M_n(F))$.
\item[(2)] Let $R$ be a reduced ring. Then the rings $U_n(R)$, $D_n(R)$ and $V_n(R)$ are Zhou right  $e$-reduced for any $n\in \mathbb
N$.
\end{enumerate}}
\end{exs}

\begin{proof} (1) For any field $F$, $M_n(F)$ is a semisimple ring. So it is clear since $\delta(M_n(F)) = M_n(F)$.\\
(2) For a reduced ring $R$, it is clear that $N(U_n(R))\subseteq
\delta(U_n(R))$, $N(D_n(R))\subseteq \delta(D_n(R))$ and
$N(V_n(R))\subseteq \delta(V_n(R))$. By Proposition \ref{lem}(3)
and make using the facts $N(U_n(R))$, $N(D_n(R))$ and $N(V_n(R))$
are ideals in $\delta(U_n(R))$, $\delta(D_n(R))$ and
$\delta(V_n(R))$, respectively, the result follows.
\end{proof}


\begin{ex} \em Let $R$ denote the ring in Example \ref{orn}. Since $ba\in R$ is the unique nonzero nilpotent in $R$,
nilpotent elements of $U_2(R)$ are of the forms $X =
\begin{bmatrix}ba&*\\0&0\end{bmatrix}$, $Y =
\begin{bmatrix}ba&*\\0&ba\end{bmatrix}$,  $Z =
\begin{bmatrix}0&*\\0&ba\end{bmatrix}$ and $T=\begin{bmatrix}0&*\\0&0\end{bmatrix}$. Since $\{X, Y,
Z,T\}\subseteq \delta(U_2(R))$, $U_2(R)$ is  Zhou right
$E$-reduced  and Zhou left $E$-reduced  for each $E\in$
Id$(U_2(R))$.
\end{ex}

Recall that a ring $R$ is called {\it right {\rm (}quasi-{\rm
)}duo} if every (maximal) right ideal of $R$ is two-sided. A {\it
left {\rm(}quasi-{\rm)}duo ring} is defined analogously. A ring is
said to be {\it {\rm(}quasi-{\rm)}duo} if it is both right
(quasi-)duo and left (quasi-)duo. One might think that
$N(R)\subseteq \delta(R)$, i.e., $R$ is Zhou right $1$-reduced.
But this is not the case in general. In the following, we show
that this containment is true for quasi-duo rings.

\begin{lem}\label{div} Every simple quasi-duo ring is a division ring.
\end{lem}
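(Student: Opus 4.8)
The plan is to combine the simplicity of $R$ with the quasi-duo hypothesis to show that the zero ideal is itself a maximal right ideal, and then to deduce that $R$ has no nonzero proper right ideals at all, which for a ring with identity forces $R$ to be a division ring. The idea is that quasi-duo turns a one-sided maximality condition into a two-sided one, where simplicity can bite.

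Concretely, I would argue as follows. Since $R$ has an identity and is nonzero, the zero ideal is a proper right ideal, so by Zorn's lemma $R$ possesses a maximal right ideal $M$. Because $R$ is (right) quasi-duo, $M$ is two-sided. But $R$ is simple and $M$ is proper, so necessarily $M = 0$. Thus $0$ is a maximal right ideal, which says precisely that the right regular module $R_R$ is simple; equivalently, the only right ideals of $R$ are $0$ and $R$. From here the conclusion is routine: for any nonzero $a \in R$ the right ideal $aR$ is nonzero (it contains $a = a\cdot 1$), hence $aR = R$, so there is $b \in R$ with $ab = 1$, i.e. every nonzero element has a right inverse. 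A standard argument upgrades this to two-sided invertibility: given $a \neq 0$ with $ab = 1$, the element $b$ is nonzero and so has its own right inverse $c$ with $bc = 1$, whence $a = a(bc) = (ab)c = c$ and therefore $ba = bc = 1$. Thus every nonzero element is a unit and $R$ is a division ring.

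The argument is short, and the only points requiring a little care are the appeal to Zorn's lemma to guarantee a maximal right ideal exists (using the presence of the identity and $R \neq 0$) and the final bookkeeping that converts one-sided inverses into two-sided ones; neither is a genuine obstacle. I would also note that only the right quasi-duo part of the hypothesis is actually used, so the statement holds already for simple right quasi-duo rings.
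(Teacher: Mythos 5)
Your proof is correct, but it takes a different route from the paper's. The paper argues element-wise: for $0\neq a\in R$ it considers the principal left ideal $Ra$, asserts that quasi-duo makes $Ra$ two-sided, concludes $Ra=R$ by simplicity, and so gets left (and symmetrically right) invertibility of $a$. You instead apply the quasi-duo hypothesis where it literally lives — at a maximal right ideal $M$, produced by Zorn's lemma — deduce $M=0$ from simplicity, conclude that $R_R$ is a simple module, and then convert right inverses into two-sided ones by the standard $a=a(bc)=(ab)c=c$ bookkeeping. Your version is arguably the more careful one: the quasi-duo condition only promises that \emph{maximal} one-sided ideals are two-sided, so the paper's claim that the arbitrary left ideal $Ra$ is an ideal needs an intermediate step (if $Ra\neq R$, embed it in a maximal left ideal, which is then a proper two-sided ideal containing $a$, contradicting simplicity); your argument avoids this entirely. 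You also correctly observe that only the right quasi-duo half of the hypothesis is used, so the lemma holds for simple right quasi-duo rings — a slight sharpening that matches how the lemma is actually deployed later in the paper.
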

\begin{proof} Let $0\neq a\in R$. Consider the left ideal $Ra$. Since $R$ is quasi-duo,
$Ra$ is an ideal. Hence $Ra = R$. Thus $a$ is left invertible, and
similarly, it is also right invertible. Therefore $R$ is a
division ring.
\end{proof}
\begin{prop}\label{Red} Let $R$ be a quasi-duo ring. Then $R/\delta(R)$ is a reduced ring.
\end{prop}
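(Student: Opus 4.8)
The plan is to realize $R/\delta(R)$ as a subdirect product of division rings, which is automatically reduced. By Lemma \ref{besli}(1), $\delta(R) = \bigcap_{i\in I} M_i$, where $\{M_i\}_{i\in I}$ is the family of all essential maximal right ideals of $R$. The natural map $R \to \prod_{i\in I} R/M_i$ has kernel $\bigcap_{i\in I} M_i = \delta(R)$, so it induces an embedding $R/\delta(R) \hookrightarrow \prod_{i\in I} R/M_i$. Hence it suffices to show that each $R/M_i$ is a division ring, since a product of division rings is reduced and any subring of a reduced ring is reduced. (If there are no essential maximal right ideals, then $\delta(R) = R$ and $R/\delta(R) = 0$ is trivially reduced, so I may assume the family is nonempty.)

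The key step is to show that for each essential maximal right ideal $M = M_i$ the quotient $R/M$ is a division ring. First I would invoke the hypothesis: since $R$ is quasi-duo, every maximal right ideal is two-sided, so $M$ is an ideal and $S = R/M$ is a ring with identity. I would then verify two things about $S$. Right ideals of $S$ correspond to right ideals of $R$ containing $M$; by maximality of $M$ these are only $M$ and $R$, so in particular the two-sided ideals of $S$ are trivial and $S$ is a simple ring. Moreover, maximal right ideals of $S$ pull back to maximal right ideals of $R$ containing $M$, each of which is two-sided in $R$ and hence gives a two-sided ideal of $S$; thus $S$ is again quasi-duo. Now Lemma \ref{div} applies and yields that $S = R/M$ is a division ring, completing the reduction.

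The main obstacle is this division-ring step, and within it the care needed to pass correctly between \emph{simple as a right module} and \emph{simple as a ring}: the two-sidedness supplied by the quasi-duo hypothesis is exactly what upgrades the simplicity of $R/M$ as an $R$-module to simplicity of $R/M$ as a ring, so that Lemma \ref{div} becomes applicable. (Alternatively, one can bypass Lemma \ref{div}: once $M$ is two-sided, $R/M$ has only the trivial right ideals, and a nonzero ring whose only right ideals are trivial is a division ring.) Everything else---the identification of $\delta(R)$ with the stated intersection, the induced embedding, and the fact that subrings of reduced rings are reduced---is routine.
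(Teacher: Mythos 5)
Your proof is correct and follows essentially the same route as the paper: embed $R/\delta(R)$ into $\prod_i R/M_i$ over the essential maximal right ideals, use the quasi-duo hypothesis to make each $M_i$ two-sided, and conclude each $R/M_i$ is a division ring via Lemma \ref{div}. You are in fact slightly more careful than the paper, which applies Lemma \ref{div} without checking that $R/M_i$ inherits the quasi-duo property; your alternative observation that a two-sided maximal right ideal already forces $R/M_i$ to be a division ring cleanly sidesteps that gap.
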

\begin{proof} Suppose that  $R$ is a quasi-duo ring. By definition, $\delta(R)$ is intersection of maximal essential
right ideals $\{M_i\}_{i\in I}$ of $R$. For each $i\in I$, the
canonical map $R\rightarrow R/I_i$ induces an injection $\alpha
\colon R/\delta(R)\rightarrow \prod_{i\in I} R/M_i$. As $R$ is
quasi-duo, for each $i\in I$, all $M_i$ are ideals. Hence $R/M_i$
are all simple rings. By Lemma \ref{div}, $R/M_i$ are all division
rings. It follows that $R/\delta(R)$ is reduced.
\end{proof}
\begin{cor} If $R$ is a quasi-duo ring and $\delta(R) = 0$, then $R$ is reduced.
\end{cor}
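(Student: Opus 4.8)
The plan is to derive this statement as an immediate specialization of Proposition \ref{Red}. Since $R$ is assumed to be quasi-duo, that proposition applies directly and guarantees that the quotient ring $R/\delta(R)$ is reduced. The only additional ingredient I need is the observation that, under the hypothesis $\delta(R) = 0$, the quotient $R/\delta(R)$ coincides with $R$ itself; more precisely, the canonical projection $R \to R/\delta(R)$ has trivial kernel and is therefore an isomorphism $R \cong R/\delta(R)$.

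Combining these two facts, I would argue that reducedness transfers along this isomorphism: a ring isomorphic to a reduced ring is itself reduced, since a ring isomorphism carries nilpotent elements to nilpotent elements and sends $0$ to $0$, so no nonzero nilpotent can appear on either side. Thus from $R \cong R/\delta(R)$ together with the reducedness of $R/\delta(R)$, I conclude that $R$ has no nonzero nilpotent elements, i.e., $R$ is reduced.

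I do not anticipate any genuine obstacle here. The entire substance of the corollary is already carried by Proposition \ref{Red}; the remaining work is the routine identification $R/\{0\} \cong R$ and the elementary fact that reducedness is an isomorphism invariant. If anything, the only point worth stating explicitly is that setting $\delta(R) = 0$ is precisely the hypothesis needed to collapse the quotient back to $R$, which is why the corollary follows without further computation.
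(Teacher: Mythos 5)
Your proof is correct and follows exactly the route the paper intends: the corollary is stated without proof as an immediate consequence of Proposition \ref{Red}, and your argument (apply the proposition, then identify $R$ with $R/\delta(R)$ when $\delta(R)=0$) is precisely that intended specialization.
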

\begin{thm}\label{NR} Every right quasi-duo ring is Zhou right $1$-reduced.
\end{thm}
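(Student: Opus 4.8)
The plan is to show that a right quasi-duo ring $R$ satisfies $N(R)\subseteq \delta(R)$, which is precisely the assertion that $R$ is Zhou right $1$-reduced. The key reduction is this: once I know that the quotient $R/\delta(R)$ is reduced, the conclusion is immediate. Indeed, if $a\in N(R)$ with $a^n=0$, then the image $\bar a$ in $R/\delta(R)$ satisfies $\bar a^{\,n}=0$, so $\bar a=0$ by reducedness, whence $a\in\delta(R)$. Thus the whole problem reduces to establishing that $R/\delta(R)$ is reduced under the hypothesis that $R$ is merely right quasi-duo.

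To prove that $R/\delta(R)$ is reduced, I would follow the scheme of Proposition \ref{Red}, taking care that only the right-handed hypothesis is available. By Lemma \ref{besli}(1), $\delta(R)=\bigcap_{i\in I}M_i$ is the intersection of all essential maximal right ideals $M_i$ of $R$ (if there are none, then $\delta(R)=R$ and the claim is trivial). Since $R$ is right quasi-duo, each $M_i$ is two-sided, so each quotient $R/M_i$ is a simple ring; moreover, as a homomorphic image of a right quasi-duo ring modulo a two-sided ideal, $R/M_i$ is again right quasi-duo. The canonical map $R\to\prod_{i\in I}R/M_i$ has kernel $\bigcap_{i\in I}M_i=\delta(R)$, giving an embedding $R/\delta(R)\hookrightarrow \prod_{i\in I}R/M_i$. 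Because a product of reduced rings is reduced and a subring of a reduced ring is reduced, it suffices to show that each $R/M_i$ is a division ring.

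The main obstacle is precisely this last point, since Lemma \ref{div} is stated for two-sided quasi-duo rings, whereas here each $R/M_i$ is known only to be simple and right quasi-duo. I would supply the right-sided analogue directly. Let $S=R/M_i$, a simple ring with identity. Choosing any maximal right ideal $P$ of $S$ (which exists since $S\neq 0$), right quasi-duo makes $P$ two-sided; being proper, and $S$ being simple, forces $P=0$. Hence $0$ is a maximal right ideal of $S$, so $S_S$ is a simple module, and therefore $S$ is a division ring. This shows each $R/M_i$ is reduced, so $R/\delta(R)$ is reduced, and the theorem follows. The only care needed is to verify that right quasi-duo passes to the quotients $R/M_i$ and that this direct argument for the division ring property uses nothing beyond the right-sided hypothesis, thereby replacing the appeal to Lemma \ref{div}.
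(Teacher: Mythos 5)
Your proof is correct, but it takes a genuinely different route from the paper's. The paper argues directly and elementarily, imitating Yu's argument: assuming $a^n=0$ and $a\notin\delta(R)$, it picks an essential maximal right ideal $M$ with $aR+M=R$, repeatedly multiplies on the left by $a$ to obtain $a^{n-1}M+\cdots+aM+M=R$, and then uses the two-sidedness of $M$ (from right quasi-duo) to force $M=R$, a contradiction. You instead prove the stronger structural statement that $R/\delta(R)$ is reduced, by writing $\delta(R)=\bigcap_i M_i$, observing each $M_i$ is two-sided and each $R/M_i$ is simple and right quasi-duo, and then supplying the right-handed analogue of Lemma \ref{div} (any maximal right ideal of the simple ring $R/M_i$ is two-sided, hence zero, so $R/M_i$ is a division ring) to embed $R/\delta(R)$ into a product of division rings. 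Your chain of reductions is sound at every step, including the passage of right quasi-duo to quotients and the closing of the gap in Lemma \ref{div}, which as stated requires the two-sided hypothesis. What your approach buys is a sharpened version of Proposition \ref{Red} -- reducedness of $R/\delta(R)$ under only the right quasi-duo hypothesis -- from which the theorem falls out; what the paper's approach buys is self-containedness and brevity, avoiding any appeal to the structure of the quotients $R/M_i$.
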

\begin{proof} For the sake of completeness, we imitate the proof of \cite[Lemma 2.3]{Yu} to get the result.
Suppose that $R$ is right quasi-duo. Let $a\in R$ with $a^n = 0$
and $a^{n-1}\neq 0$ for some integer $n>1$. By a contradiction,
assume that $a\notin \delta(R)$. There exists a maximal essential
right ideal $M$ such that $aR + M = R$. Multiplying the latter by
$a$ from the left, we get $a^2R + aM = aR$. It entails that $a^2R
+ aM + M = R$. We continue multiplying the latter by $a$ from the
left, we get $a^{n-1}R + a^{n-2}M+\dots + aM + M = R$. At the
$n^{th}$-step we get $a^{n-1}M + a^{n-2}M +\dots + aM + M = R$.
Since $R$ is right quasi-duo, $a^iM\subseteq M$ where $1\leq i\leq
n-1$. Thus $M = R$. This is the required contradiction. Therefore
$N(R)\subseteq \delta(R)$. This means that $R$ is Zhou right
$1$-reduced.
\end{proof}

Immediately, we obtain the next result by Proposition \ref{lem}(3)
and Theorem \ref{NR}.

\begin{cor} Every right quasi-duo ring is Zhou right $e$-reduced.
\end{cor}

There are Zhou right  $e$-reduced rings which are not right
(quasi-)duo.

\begin{exs}\label{exg}\em (1) Let $F$ be a field and
$R = U_2(F)$. Then $\delta(R) =
\begin{bmatrix}0&F\\0&F\end{bmatrix}$. Let $I =
\left\{\begin{bmatrix}0&a\\0&a\end{bmatrix}\mid a\in F\right\}$
and $L = \left\{\begin{bmatrix}a&a\\0&0\end{bmatrix}\mid a\in
F\right\}$. Then $I$ is a right ideal but not left and $L$ is a
left ideal but not right. However, since $N(R)\subseteq
\delta(R)$, by Proposition \ref{lem} (3), $R$ is Zhou $e$-reduced
for $e\in$ Id$(R)$.\\
(2) For  a division ring $D$ and a positive integer $n\geq 2$, the
ring $M_n(D)$ is Zhou right $e$-reduced but not quasi-duo.
\end{exs}

We now give a characterization of Zhou right $e$-reduced rings by
some subring of direct product of rings.

\begin{lem} Let $R$ be a ring and $S = \{(r,s)\in R\times R\mid r - s\in
\delta(R)\}$. Then $\delta(S)=\{(r,s)\in \delta(R)\times
\delta(R)\mid r - s\in \delta(R)\}$.
\end{lem}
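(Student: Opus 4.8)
The plan is to recognize the subring $S$ as a Dorroh extension and then invoke Lemma \ref{IR}(3). First I would record the elementary reductions. The set $S$ is a unital subring of $R\times R$ with identity $(1,1)$, and since $\delta(R)$ is a two-sided ideal of $R$, the asserted set $\{(r,s)\in \delta(R)\times \delta(R)\mid r-s\in \delta(R)\}$ is simply $\delta(R)\times \delta(R)$, because the constraint $r-s\in \delta(R)$ is automatic once $r,s\in \delta(R)$. Thus the goal is to prove $\delta(S)=\delta(R)\times \delta(R)$. I would also note that $\delta(R)$, being an ideal of $R$, is an associative (possibly nonunital) $R$-algebra and an $(R,R)$-bimodule satisfying the compatibility conditions $a(ts)=(at)s$, $t(as)=(ta)s$, $(ts)a=t(sa)$, so the Dorroh extension $D(R,\delta(R))$ is defined.

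The crux is to exhibit a ring isomorphism $\Phi\colon S\to D(R,\delta(R))$. I would set $\Phi(r,s)=(r,\,s-r)$; this is well defined because $s-r\in \delta(R)$, and it is bijective with inverse $(a,t)\mapsto (a,\,a+t)$. Additivity is immediate. The key computation is multiplicativity: using the Dorroh product $(a_1,t_1)(a_2,t_2)=(a_1a_2,\,a_1t_2+t_1a_2+t_1t_2)$, one checks that the second coordinate of $\Phi(r_1,s_1)\Phi(r_2,s_2)$, namely $r_1(s_2-r_2)+(s_1-r_1)r_2+(s_1-r_1)(s_2-r_2)$, collapses after cancellation to $s_1s_2-r_1r_2$, which is exactly the second coordinate of $\Phi((r_1,s_1)(r_2,s_2))=\Phi(r_1r_2,\,s_1s_2)$. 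Since also $\Phi(1,1)=(1,0)$, the identity of $D(R,\delta(R))$, the map $\Phi$ is an isomorphism of rings. This step, and in particular the coordinate change $t=s-r$ that converts the pullback $S$ into a Dorroh extension, is the one genuinely nonroutine point; the sign in the Dorroh product is what makes the match work.

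With this identification in hand, Lemma \ref{IR}(3) applied to $T=\delta(R)$ yields $\delta(D(R,\delta(R)))=\delta(R)\oplus \delta(R)=\{(a,t)\mid a\in \delta(R),\ t\in \delta(R)\}$. Because the Zhou radical is defined purely ring-theoretically as the intersection of all essential maximal right ideals (Lemma \ref{besli}(1)), any ring isomorphism carries $\delta$ onto $\delta$; hence $\delta(S)=\Phi^{-1}(\delta(R)\oplus \delta(R))$. Unwinding via $\Phi^{-1}(a,t)=(a,\,a+t)$ shows this preimage equals $\{(r,s)\mid r\in \delta(R),\ s\in \delta(R)\}=\delta(R)\times \delta(R)$, which is the claimed description. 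Thus the whole argument reduces, after the isomorphism is set up, to the already-established Lemma \ref{IR}(3), and the main obstacle is simply recognizing the Dorroh structure of $S$.
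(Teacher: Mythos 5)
The paper states this lemma without any proof, so there is no argument of the authors' to compare yours against; I can only assess the proposal on its own terms. Your preliminary reductions are correct: since $\delta(R)$ is an ideal, the target set is just $\delta(R)\times\delta(R)$, and the map $\Phi(r,s)=(r,s-r)$ really is a ring isomorphism of $S$ onto the Dorroh extension $D(R,\delta(R))$ (the cancellation in the second coordinate checks out, and $\Phi(1,1)=(1,0)$). Recognizing the pullback $S$ as a Dorroh extension is a genuinely nice observation.

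The gap is in the final step. Lemma \ref{IR}(3), at the generality at which it is stated, is false: taking $T=R$ (or $T=2\mathbb{Z}$ inside $R=\mathbb{Z}$), your own change of coordinates identifies $D(R,R)$ with the product ring $R\times R$, whose Zhou radical is $\delta(R)\times\delta(R)$ by Lemma \ref{mod}(2), not $\delta(R)\oplus R$; concretely $\delta(D(\mathbb{Z},\mathbb{Z}))=0\neq 0\oplus\mathbb{Z}$. So the formula $\delta(D(R,T))=\delta(R)\oplus T$ cannot be invoked for an arbitrary ideal $T$, and the one instance you need, $T=\delta(R)$, is --- via the isomorphism $\Phi$ you constructed --- literally equivalent to the statement being proved. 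The argument therefore transfers the entire content of the lemma onto a citation that is unproved here and incorrect as stated, rather than establishing anything. A direct argument in $S$ closes the gap: since $(\delta(R)\times 0)(0\times\delta(R))=0$, every maximal right ideal $M$ of $S$ contains $\delta(R)\times 0$ or $0\times\delta(R)$, hence $M=\pi_i^{-1}(M')$ for one of the two coordinate projections $\pi_i\colon S\to R$ and a maximal right ideal $M'$ of $R$; such an $M$ is essential in $S_S$ if and only if $M'$ is essential in $R_R$ (if $M'$ is not essential, a minimal complement $K'$ lies in $\mathrm{Soc}(R_R)\subseteq\delta(R)$, so $K'\times 0$ is a nonzero right ideal of $S$ meeting $M$ trivially; conversely, if $M'$ is essential and $(A,B)\in S$ is nonzero with $A\neq 0$, choose $r$ with $0\neq Ar\in M'$ and note $(A,B)(r,r)\in M\setminus\{0\}$). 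Consequently the essential maximal right ideals of $S$ are exactly the $\pi_i^{-1}(M')$ with $M'$ essential maximal in $R$, each of which contains $\delta(R)\times\delta(R)$ because $\delta(R)\subseteq M'$, and their intersection is $\pi_1^{-1}(\delta(R))=\delta(R)\times\delta(R)$ since $r\in\delta(R)$ together with $r-s\in\delta(R)$ forces $s\in\delta(R)$.
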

\begin{thm} A ring $R$ is Zhou right $e$-reduced if and only if the ring
$S = \{(r,s)\in R\times R\mid r - s\in \delta(R)\}$ is Zhou right
$(e, e)$-reduced.
\end{thm}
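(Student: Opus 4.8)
The plan is to prove both directions by directly unpacking the two definitions, using the preceding lemma's explicit description of $\delta(S)$ as the bridge. Recall that $S=\{(r,s)\in R\times R\mid r-s\in\delta(R)\}$ and, by the lemma, $\delta(S)=\{(r,s)\in\delta(R)\times\delta(R)\mid r-s\in\delta(R)\}$. Since $\delta(R)$ is a subgroup of $(R,+)$, the difference condition $r-s\in\delta(R)$ is automatic once $r,s\in\delta(R)$, so in fact $\delta(S)=\delta(R)\times\delta(R)$; I would note this simplification early since it makes both inclusions transparent. The idempotent in play is $(e,e)\in\mathrm{Id}(S)$, which lies in $S$ because $e-e=0\in\delta(R)$.

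For the forward direction, assume $R$ is Zhou right $e$-reduced, i.e.\ $N(R)e\subseteq\delta(R)$. First I would identify $N(S)$: an element $(a,b)\in S$ is nilpotent if and only if both $a\in N(R)$ and $b\in N(R)$, since multiplication in $R\times R$ is componentwise and $(a,b)^n=(a^n,b^n)$. Then for $(a,b)\in N(S)$ one computes $(a,b)(e,e)=(ae,be)$, and by hypothesis $ae\in\delta(R)$ and $be\in\delta(R)$. Hence $(ae,be)\in\delta(R)\times\delta(R)=\delta(S)$, which gives $N(S)(e,e)\subseteq\delta(S)$, as required.

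For the converse, assume $S$ is Zhou right $(e,e)$-reduced, and take any $a\in N(R)$, say $a^n=0$. Then $(a,a)\in S$ (indeed $a-a=0\in\delta(R)$) and $(a,a)^n=(0,0)$, so $(a,a)\in N(S)$. Applying the hypothesis, $(a,a)(e,e)=(ae,ae)\in\delta(S)=\delta(R)\times\delta(R)$, so in particular $ae\in\delta(R)$. Since $a\in N(R)$ was arbitrary, $N(R)e\subseteq\delta(R)$ and $R$ is Zhou right $e$-reduced.

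I do not expect a serious obstacle here, as the argument is essentially bookkeeping once the shape of $\delta(S)$ is in hand. The one point deserving care is the characterization of $N(S)$: I must justify that a nilpotent in the subring $S$ is exactly a pair of nilpotents, and conversely that a pair of component-nilpotents of possibly different indices lies in $S$ and is nilpotent (take the maximum of the two indices). Everything else follows from the componentwise structure of the product together with the lemma, so the real content is simply invoking the explicit form of $\delta(S)$ at the right moment in each direction.
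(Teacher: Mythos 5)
Your proof is correct and follows essentially the same route as the paper's: identify $N(S)$ componentwise, compute $(a,b)(e,e)$, and use the diagonal element $(a,a)$ for the converse. Your observation that $\delta(S)=\delta(R)\times\delta(R)$ (since $r-s\in\delta(R)$ is automatic for $r,s\in\delta(R)$) is a small tidy improvement that makes the paper's explicit check of $(r-s)e=re-se\in\delta(R)$ unnecessary.
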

\begin{proof} $\Rightarrow:$ Assume that $R$ is a Zhou right $e$-reduced ring and $(r, s)\in N(S)$.
Then $r\in N(R)$ and $s\in N(R)$. By assumption, $re\in\delta(R)$,
$se\in \delta(R)$. Then $(r, s)(e, e) = (re, se)\in
\delta(R)\times \delta(R)$. Since $r - s\in \delta(R)$ and
$\delta(R)$
is an ideal in $R$, $(r - s)e = re - se\in\delta(R)$. So $S$ is  Zhou right $(e, e)$-reduced. \\
$\Leftarrow:$ Suppose that $S$ is  Zhou right $(e, e)$-reduced.
Let $r\in N(R)$. Then $(r, r)\in S$, in particular $(r, r)\in
N(S)$. By supposition, $(r, r)(e, e)\in \delta(R)\times
\delta(R)$. Since $(r, r)(e, e) = (re, re)$ and $(r, r)(e, e)\in
\delta(R)\times \delta(R)$, $re\in\delta(R)$. It follows that $R$
is Zhou right $e$-reduced.
\end{proof}

Under some restricted conditions on the ring, homomorphic images of Zhou right $e$-reduced rings are Zhou right $e$-reduced rings.
\begin{prop}\label{nil1} Let $R$ be a ring. Then the following hold.
\begin{enumerate}
\item[(1)] Let $I$ be an ideal of $R$  and $e^2 = e\in
I$. Assume that $\delta(I) = I\cap \delta(R)$. If $R$ is Zhou
right $e$-reduced, then so is $I$ as a ring without identity.
\item[(2)] Let $I$ be a nil ideal of  $R$. If $R$ is  Zhou
right $e$-reduced, then $R/I$ is Zhou right $e+I$-reduced.
\end{enumerate}
\end{prop}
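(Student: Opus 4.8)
The plan is to reduce both parts to the defining inclusion $N(R)e\subseteq\delta(R)$ by moving nilpotents into $R$. For part (1), I would take an arbitrary $a\in N(I)$; since $I$ is a subring of $R$, a nilpotent of $I$ is a nilpotent of $R$, so $N(I)\subseteq N(R)$ and the hypothesis gives $ae\in\delta(R)$. Simultaneously $a\in I$ and $e\in I$ give $ae\in I$ because $I$ is closed under multiplication, so $ae\in I\cap\delta(R)$. Invoking the standing assumption $\delta(I)=I\cap\delta(R)$ then yields $ae\in\delta(I)$, that is $N(I)e\subseteq\delta(I)$, which is exactly the statement that $I$ (as a ring without identity) is Zhou right $e$-reduced. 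The assumption $\delta(I)=I\cap\delta(R)$ enters only at this final step; by Lemma \ref{delI} one always has $\delta(I)\subseteq I\cap\delta(R)$, and without the reverse inclusion the argument cannot be closed, so the whole content of (1) is bookkeeping around that equality.

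For part (2), I would first note that $e+I$ is an idempotent of $R/I$, since $(e+I)^2=e^2+I=e+I$. The preliminary step is to describe $N(R/I)$: if $\overline a=a+I$ is nilpotent in $R/I$, then $a^n\in I$ for some $n$, and because $I$ is nil, $a^n$ — hence $a$ — is nilpotent in $R$; thus each $\overline a\in N(R/I)$ has its representatives in $N(R)$. This is the one place where nilness of $I$ is indispensable. For such an $\overline a$, the hypothesis gives $ae\in\delta(R)$, so $\overline a(e+I)=ae+I$ is the image under the canonical projection $\pi\colon R\to R/I$ of an element of $\delta(R)$.

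The hard part will be verifying that $\pi(\delta(R))\subseteq\delta(R/I)$, which is what places $ae+I$ inside $\delta(R/I)$. I would establish this directly from the description of $\delta(\,\cdot\,)$ as the intersection of all essential maximal right ideals in Lemma \ref{besli}(1): every essential maximal right ideal of $R/I$ has the form $M/I$ with $I\subseteq M$ and $M$ maximal in $R$, and I would check that $M/I$ essential in $(R/I)_{R/I}$ forces $M$ essential in $R_R$ (if $A$ is a nonzero right ideal with $A\cap M=0$ then $A\cap I=0$, so $(A+I)/I$ is nonzero, essentiality of $M/I$ produces a nonzero element of $((A+I)\cap M)/I$, and pulling it back contradicts $A\cap M=0$). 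Consequently $\delta(R)\subseteq M$ for each such $M$, giving $\pi(\delta(R))\subseteq\delta(R/I)$; alternatively one may apply Lemma \ref{mod}(3) to $\pi$ regarded as an $R$-module homomorphism after identifying the Zhou radical of $R/I$ over $R$ with that over itself. Either way, $\overline a(e+I)\in\delta(R/I)$ for every $\overline a\in N(R/I)$, so $R/I$ is Zhou right $(e+I)$-reduced.
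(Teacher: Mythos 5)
Your argument is correct and follows the paper's proof essentially verbatim: part (1) is the same short reduction via $N(I)\subseteq N(R)$ together with the assumed equality $\delta(I)=I\cap\delta(R)$ (you also make explicit the needed observation that $ae\in I$), and part (2) lifts nilpotents through the nil ideal exactly as the paper does before pushing $ae$ forward along the canonical projection. The only point of divergence is that where the paper disposes of $\pi(\delta(R))\subseteq\delta(R/I)$ by citing Lemma \ref{mod}(3), you prove it directly from the description of $\delta$ as the intersection of essential maximal right ideals in Lemma \ref{besli}(1); that substitute is sound and arguably more self-contained, since it sidesteps the identification of the Zhou radical of $R/I$ viewed as an $R$-module with that of $R/I$ viewed as a ring.
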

\begin{proof} (1) Let $a\in I$ and assume that $a^n = 0$ for some $n > 1$. Since $R$ is Zhou right $e$-reduced,
$ae\in \delta(R)$. The assumption $\delta(I) = I\cap \delta(R)$ implies that $ae\in \delta(I)$.\\
(2) Let $a\in R$ with $a + I\in N(R/I)$. Then $a^n\in I$ for some
positive integer $n$. Since $I$ is nil, there exists a positive
integer $m$ such that $a^{nm} = 0$. The ring $R$ being Zhou right
$e$-reduced implies $ae\in \delta(R)$. Let $\pi\colon R\rightarrow
R/I$ denote the natural homomorphism with $\pi(r) = r + I$. Then
$\pi(ae) = ae + I$. Since $\pi(\delta(R))\subseteq \delta(R/I)$ by
Lemma \ref{mod}, we have $ae + I\in \delta(R/I)$.
\end{proof}

\begin{thm}  Let $\{R_i\}_{i\in I}$ be a family of rings where $I=\{1,2,\dots,n\}$ and
$R = \prod^n\limits_{i=1} R_i$ and $e_i^2 = e_i\in R_i$ for each
$i\in I$
 and set $e = (e_i)\in R$. Then $R_i$ is Zhou right $e_i$-reduced for each
 $i \in I$ if and only if $R$ is Zhou right $e$-reduced.
\end{thm}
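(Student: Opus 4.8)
The plan is to reduce everything to two coordinatewise facts about the finite product $R=\prod_{i=1}^n R_i$, namely that both the nilpotent set and the Zhou radical distribute over the product:
$$N(R)=\prod_{i=1}^n N(R_i)\quad\text{and}\quad \delta(R)=\prod_{i=1}^n\delta(R_i).$$
The first is elementary: an element $(a_i)\in R$ satisfies $(a_i)^k=(a_i^k)$, and since there are only finitely many coordinates, $(a_i)$ is nilpotent precisely when each $a_i$ is nilpotent (take $k$ to be the maximum of the individual nilpotency indices). The second is where the real content sits, and I expect it to be the main step. I would obtain it from Lemma \ref{mod}: writing $f_i=(0,\dots,1_{R_i},\dots,0)$ for the $i$-th (central) idempotent of $R$, part (1) gives $\delta(R_i)=\delta(f_iRf_i)=(f_iRf_i)\cap\delta(R)=R_i\cap\delta(R)$, where $R_i$ is identified with the ideal $f_iR$. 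Since $\delta(R)$ is an ideal of the finite product $R$ and the $f_i$ are orthogonal central idempotents summing to $1$, every element $x\in\delta(R)$ decomposes as $x=\sum_i f_ix$ with $f_ix\in R_i\cap\delta(R)=\delta(R_i)$; hence $\delta(R)=\bigoplus_i\delta(R_i)=\prod_i\delta(R_i)$. Alternatively one may apply Lemma \ref{mod}(2) to the decomposition $R_R=\bigoplus_i(R_i)_R$, after noting that the right $R$-submodules of $R_i$ coincide with its right $R_i$-submodules, so that $\delta((R_i)_R)=\delta(R_i)$.

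With these identities in hand both implications become routine. For the forward direction, assume each $R_i$ is Zhou right $e_i$-reduced, i.e.\ $N(R_i)e_i\subseteq\delta(R_i)$. Given $a=(a_i)\in N(R)$ we have $a_i\in N(R_i)$ for every $i$, so $ae=(a_ie_i)$ has $a_ie_i\in\delta(R_i)$ in each coordinate, whence $ae\in\prod_i\delta(R_i)=\delta(R)$; thus $N(R)e\subseteq\delta(R)$ and $R$ is Zhou right $e$-reduced. For the converse, assume $R$ is Zhou right $e$-reduced, fix an index $j$, and take $a_j\in N(R_j)$. The element $a=(0,\dots,a_j,\dots,0)$ lies in $N(R)$ (its only nonzero coordinate is nilpotent), so $ae=(0,\dots,a_je_j,\dots,0)\in\delta(R)=\prod_i\delta(R_i)$ forces $a_je_j\in\delta(R_j)$. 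As $a_j$ was arbitrary this gives $N(R_j)e_j\subseteq\delta(R_j)$, so each $R_j$ is Zhou right $e_j$-reduced.

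The only delicate point is the distributivity $\delta(R)=\prod_i\delta(R_i)$, and more precisely the identification of the Zhou radical of the ring $R_i$ with the ideal $R_i\cap\delta(R)$ that it carves out inside $R$; everything else is bookkeeping with the central idempotents $f_i$. Because the family is finite, no issue about infinite products versus direct sums arises and the nilpotency indices can be bounded uniformly, so the argument goes through without any further hypotheses.
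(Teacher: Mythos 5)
Your proof is correct and follows essentially the same route as the paper: identify $N(R)=\prod_i N(R_i)$ and $\delta(R)=\prod_i\delta(R_i)$ (the paper cites Lemma \ref{mod}(2) for the latter, which is one of the two justifications you give), then argue coordinatewise in both directions, using the embedding $a_j\mapsto(0,\dots,a_j,\dots,0)$ for the converse. Your extra care in deriving the distributivity of $\delta$ over the finite product is a more detailed version of the same step.
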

\begin{proof} Note that $\delta(R) = \prod^n\limits_{i=1} \delta(R_i)$ by Lemma \ref{mod}(2).
Assume that $R_i$ is Zhou right $e_i$-reduced for each $i\in I$.
Let $a = (a_i)\in N(R)$.
  Then $a_i\in$ $N(R_i)$ for each $i\in I$. By assumption, $a_ie_i\in \delta(R_i)$ for each $i\in I$.
 Hence $ae\in \delta(R)$. Conversely, suppose that $R$ is Zhou right $e$-reduced.
Let $a_i\in N(R_i)$. Define $a = (x_i)\in R$ by $x_i = a_i$ and
$x_j = 0$ in case $i\neq j$. Then $a\in N(R)$. By supposition,
$ae\in \delta(R)$. It entails that $a_ie_i\in \delta(R_i)$.
\end{proof}

We close this section by observing some results about  corner
rings.

\begin{prop} Let $R$ be a Zhou right $e$-reduced ring. Then $eRe$
is Zhou right $f$-reduced for every $f\in $ Id$(eRe)$.
\end{prop}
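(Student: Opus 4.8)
The plan is to reduce the whole statement to the single observation that the nilpotents of the corner ring $eRe$ already lie in $\delta(R)$, after which the chosen idempotent $f$ plays essentially no role. First I would record the elementary facts about the corner ring. Recall that $eRe$ is a ring with identity $e$, whose multiplication is the restriction of the one on $R$, and that every element $a\in eRe$ can be written $a=ere$ for some $r\in R$, so that $ae=(ere)e=ere=a$ and likewise $ea=a$. In particular, any $f\in\mathrm{Id}(eRe)$ satisfies $ef=fe=f$. These identities are what make the ambient idempotent $e$ act as a two-sided identity on elements of $eRe$.

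The core of the argument is then as follows. Let $a\in N(eRe)$, say $a^n=0$ with the power computed in $eRe$; since that product is inherited from $R$, the equality $a^n=0$ holds in $R$ as well, so $a\in N(R)$. Because $R$ is Zhou right $e$-reduced, $N(R)e\subseteq\delta(R)$, whence $ae\in\delta(R)$. But $a\in eRe$ gives $ae=a$, so in fact $a\in\delta(R)$. Combining this with $a\in eRe$ and invoking Lemma \ref{mod}(1), I get
$$
a\in (eRe)\cap\delta(R)=\delta(eRe).
$$
Thus every nilpotent of $eRe$ lies in $\delta(eRe)$ to begin with; the corner ring is in effect Zhou right $e$-reduced relative to its own identity.

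To finish, I would use that $\delta(eRe)$ is an ideal of the ring $eRe$. Since $a\in\delta(eRe)$ and $f\in eRe$, it follows that $af\in\delta(eRe)$, and therefore $N(eRe)f\subseteq\delta(eRe)$, i.e.\ $eRe$ is Zhou right $f$-reduced. (The degenerate case $f=0$ is trivial in any event, as every ring is Zhou right $0$-reduced.) There is no genuine obstacle here: the only step that carries any content is the computation $ae=a$, which shows the hypothesis already delivers $a$ itself into $\delta(R)$, collapsing the dependence on $f$. The only points demanding a little care are checking that Lemma \ref{mod}(1) applies to the idempotent $e$ and that $\delta(eRe)$ is indeed an ideal of $eRe$, both of which are standard.
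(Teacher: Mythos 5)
Your proof is correct and follows essentially the same route as the paper: both reduce to showing $N(eRe)\subseteq\delta(eRe)$ via $ae\in\delta(R)$ and Lemma \ref{mod}(1), and then pass from the identity of $eRe$ to an arbitrary $f\in\mathrm{Id}(eRe)$ (the paper by citing Proposition \ref{lem}(3), you by the equivalent observation that $\delta(eRe)$ is an ideal). The only cosmetic difference is that you write $ae=a$ where the paper writes $ae=eae$.
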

\begin{proof} First we claim that $eRe$ is Zhou  right
$e$-reduced. Let $a\in N(eRe)$. Then $a\in N(R)$, and so $ae\in
\delta(R)$. Hence $ae=eae\in e\delta(R)e=\delta(eRe)$ by Lemma
\ref{mod}(1). Thus $N(eRe)e\subseteq \delta(eRe)$, as claimed.
Therefore $eRe$ is Zhou right $f$-reduced for every $f\in $
Id$(eRe)$ by Proposition \ref{lem}(3).
\end{proof}

\begin{prop} Let $R$ be a Zhou right $e$-reduced ring and $f\in$ Id$(R)$. If $e\in$ Id$(fRf)$, then $fRf$ is Zhou right
$e$-reduced.
\end{prop}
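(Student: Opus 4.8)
The plan is to verify the defining inclusion $N(fRf)e\subseteq \delta(fRf)$ directly, by transporting a nilpotent of the corner ring up to $R$, applying the hypothesis there, and then bringing the result back down via the corner-ring formula for the Zhou radical. Concretely, I would fix an arbitrary $a\in N(fRf)$, say $a^n=0$ for some $n$, and aim to show that $ae\in \delta(fRf)$.

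First I would observe that a nilpotent of $fRf$ is a nilpotent of $R$: since $a\in fRf\subseteq R$ and $a^n=0$, we have $a\in N(R)$. Because $R$ is Zhou right $e$-reduced, this immediately yields $ae\in \delta(R)$. Next I would check that $ae$ actually lives in the corner ring $fRf$. This is exactly where the hypothesis $e\in\,$Id$(fRf)$ is used: it guarantees $e\in fRf$, and since $a\in fRf$ as well and $fRf$ is closed under multiplication, we get $ae\in fRf$. Finally I would apply Lemma~\ref{mod}(1) to the idempotent $f$, which gives $\delta(fRf)=(fRf)\cap\delta(R)$; combining $ae\in fRf$ with $ae\in\delta(R)$ then places $ae\in\delta(fRf)$. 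As $a$ was an arbitrary nilpotent of $fRf$, this establishes $N(fRf)e\subseteq\delta(fRf)$, i.e. that $fRf$ is Zhou right $e$-reduced.

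I do not expect any genuine obstacle here; the argument is essentially a containment chase. The only two points requiring a moment's care are the closure step $ae\in fRf$ --- which is precisely where the hypothesis $e\in\,$Id$(fRf)$, rather than merely $e\in\,$Id$(R)$, is needed --- and the invocation of Lemma~\ref{mod}(1), which supplies the identity $\delta(fRf)=(fRf)\cap\delta(R)$ that lets one descend from $\delta(R)$ back into the corner ring. In particular, no nilpotency-index induction or appeal to semiprimeness of $\delta(R)$ (as in the earlier propositions) is required, since the Zhou right $e$-reducedness of $R$ already delivers $ae\in\delta(R)$ in a single step.
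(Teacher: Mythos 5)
Your argument is correct and is essentially identical to the paper's own proof: both take $a\in N(fRf)\subseteq N(R)$, apply the Zhou right $e$-reducedness of $R$ to get $ae\in\delta(R)$, and then descend to the corner ring via Lemma~\ref{mod}(1). The only cosmetic difference is that you use the form $\delta(fRf)=(fRf)\cap\delta(R)$ while the paper quotes $\delta(fRf)=f\delta(R)f$; your explicit remark that $ae\in fRf$ (forced by $e\in fRf$) is a point the paper leaves implicit, and it is a welcome clarification.
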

\begin{proof} Let $f\in$ Id$(R)$, $e\in$ Id$(fRf)$ and $a\in N(fRf)$. Since
$R$ is Zhou right $e$-reduced for $e\in$ Id$(fRf)$, $ae\in
\delta(R)$. Again by Lemma \ref{mod}(1), $\delta(fRf) =
f\delta(R)f$. Then $ae\in \delta(fRf)$. This completes the proof.
\end{proof}

\begin{quo}\em Let $f\in$ Id$(R)$, $e_1\in$ Id$(fRf)$, $e_2\in$ Id$(1-f)R(1-f)$. If $fRf$ is Zhou right $e_1$-reduced
and $(1-f)R(1-f)$ is Zhou right $e_2$-reduced, then is $R$ a Zhou
right $e$-reduced  ring for some $e\in$ Id$(R)$?
\end{quo}
A negative answer exists as the following example shows.
\begin{ex}\em Consider the ring $R = M_2(\mathbb Z_4)$ with
$e = \begin{bmatrix}0&0\\3&1\end{bmatrix}\in$ Id$(R)$ and $a =
\begin{bmatrix}0&1\\0&0\end{bmatrix}\in N(R)$. Let $f=\begin{bmatrix}1&0\\0&0\end{bmatrix}$. Then
$fRf\cong \mathbb Z_4$ and $(1-f)R(1-f)\cong \mathbb Z_4$ are Zhou
right $g$-reduced for each $g\in$ Id($\mathbb Z_4)$ since
$N(\mathbb Z_4) = \delta(\mathbb Z_4)$. Unfortunately,
$ae\notin\delta(R)$.
\end{ex}

\section{Some extensions of Zhou $e$-reduced rings}

In this section, we study some extensions of rings in terms of the
Zhou $e$-reduced property. Let $R$ be a ring and $S$ a
multiplicatively closed subset of $R$ consisting of the identity 1
and some central regular elements, that is, for any element $s\in
S$ and $r\in R$, $sr = 0$ implies that $r = 0$. Consider the ring
$S^{-1}R = \{s^{-1}r \mid s\in S, r\in R\}$. We suppose that every
idempotent in $S^{-1}R$ is of the form $s^{-1}e$ where $s\in S$
and $e\in$ Id$(R)$.
\begin{prop}\label{local} Let $S$ be a multiplicatively closed subset of a ring $R$ consisting of central regular elements
and $e\in$ Id$(R)$. If $R$ is Zhou right $e$-reduced and
$s^{-1}e\in$ Id$(S^{-1}R)$,  then $S^{-1}R$ is Zhou right
$s^{-1}e$-reduced.
\end{prop}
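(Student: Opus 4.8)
The plan is to push everything back to the base ring $R$ and then invoke Lemma~\ref{del}. Unwinding the definition, I must show $N(S^{-1}R)(s^{-1}e)\subseteq \delta(S^{-1}R)$, and the two ingredients I expect to need are (i) that a nilpotent of $S^{-1}R$ is, up to a central unit, the image of a nilpotent of $R$, and (ii) that $S^{-1}\delta(R)\subseteq\delta(S^{-1}R)$, which is exactly Lemma~\ref{del}.

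First I would analyze nilpotents of the localization. Take $\alpha\in N(S^{-1}R)$ and write $\alpha=t^{-1}a$ with $t\in S$ and $a\in R$. Because the elements of $S$ are central, $t^{-1}$ is central in $S^{-1}R$, so from $\alpha^n=0$ I get $t^{-n}a^n=0$; since $t^{-n}$ is a unit this forces $a^n/1=0$ in $S^{-1}R$. The regularity hypothesis on $S$ makes the canonical map $R\to S^{-1}R$ injective (this is the identification already used in the proof of Lemma~\ref{del}), whence $a^n=0$ in $R$, i.e. $a\in N(R)$. This is the step I regard as the crux: everything hinges on pulling nilpotency back from $S^{-1}R$ to $R$, and it is precisely the central-regular hypothesis that permits it.

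With $a\in N(R)$ in hand, I would invoke the Zhou right $e$-reducedness of $R$ to conclude $ae\in\delta(R)$. Then a short computation, again using that $s$ is central so that $as^{-1}=s^{-1}a$, gives
\[
\alpha(s^{-1}e)=t^{-1}a\,s^{-1}e=(ts)^{-1}(ae).
\]
Since $ae\in\delta(R)$ and $ts\in S$, the right-hand side lies in $S^{-1}\delta(R)$, and Lemma~\ref{del} yields $(ts)^{-1}(ae)\in\delta(S^{-1}R)$. As $\alpha$ was an arbitrary nilpotent, this shows $N(S^{-1}R)(s^{-1}e)\subseteq\delta(S^{-1}R)$, i.e. $S^{-1}R$ is Zhou right $s^{-1}e$-reduced.

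The remaining points are routine bookkeeping: checking that $t^{-1}$ and $s^{-1}$ really are central in $S^{-1}R$, and that the equality $\alpha(s^{-1}e)=(ts)^{-1}(ae)$ holds in the localization; both follow at once from the centrality of $S$. Note that the standing assumption that every idempotent of $S^{-1}R$ has the form $s^{-1}e$ is not actually exercised here, since the idempotent $s^{-1}e$ is given in the hypothesis; it is the regularity of $S$ (for the nilpotent-descent step) together with Lemma~\ref{del} (for the radical inclusion) that carry the argument.
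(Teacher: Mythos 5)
Your proposal is correct and follows the same route as the paper's proof: pull the nilpotent $t^{-1}a$ back to $a\in N(R)$, apply the Zhou right $e$-reducedness of $R$ to get $ae\in\delta(R)$, and then use the identity $(t^{-1}a)(s^{-1}e)=(ts)^{-1}(ae)$ together with Lemma~\ref{del}. The only difference is that you spell out the nilpotent-descent step (centrality of $t$ plus injectivity of $R\to S^{-1}R$), which the paper simply asserts.
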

\begin{proof}  Assume that $R$ is Zhou right $e$-reduced. Let $t^{-1}a\in N(S^{-1}R)$. Then
$a\in N(R)$. By assumption, $ae\in \delta(R)$.
By construction,
$(t^{-1}a)(s^{-1}e) = (ts)^{-1}(ae)\in S^{-1}\delta(R)$. By Lemma
\ref{del},
$(ts)^{-1}(ae)\in \delta(S^{-1}R)$. Thus $S^{-1}R$ is Zhou right $s^{-1}e$-reduced.
\end{proof}

The following example is stated in \cite[Page 1967]{KL} connection
with the converse of Proposition \ref{local}. It is not true that
every element of Id$(S^{-1}R)$ has the form $s^{-1}e$ for some
$e\in$ Id$(R)$.
\begin{ex}\em Let $F$ be a field and $I$ the ideal generated by $x^2 - xy$ in $F[x, y]$.
Consider the ring $R = F[x, y]/I$. We denote the elements of $R$
without bar sign. Let $S = \{y^n\in R\mid n\geq 1\}$. Then
$(y^{-1}x)(y^{-1}x) = y^{-1}x\in$ Id$(S^{-1}R)$ but $x\notin$
Id$(R)$.
\end{ex}

\begin{thm}\label{groupring} Let  $G$ be a finite group and $F$ be a field. If the characteristic of $F$ does not divide the order of $G$, then the group ring $FG$ is
Zhou right $e$-reduced.
\end{thm}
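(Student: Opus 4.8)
The plan is to reduce the statement to the semisimple case already handled in Proposition~\ref{lem}(4), since the only input specific to group rings is \emph{Maschke's theorem}. Because $G$ is finite and the characteristic of $F$ does not divide $|G|$, the scalar $|G|\cdot 1_F$ is invertible in $F$, so the element $\frac{1}{|G|}\sum_{g\in G} g$ is a well-defined central idempotent and the standard averaging argument shows that every short exact sequence of $FG$-modules splits. Hence $FG$ is a semisimple (Artinian) ring, and this is the single nontrivial fact I would invoke.

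Once semisimplicity is in hand, I would recall why $\delta(FG)=FG$. In a semisimple ring every right ideal is a direct summand, so the only essential right ideal is the whole ring; in particular there is no \emph{essential} maximal right ideal. By Lemma~\ref{besli}(1), $\delta(FG)$ is the intersection of all essential maximal right ideals of $FG$, and the intersection over this empty family equals $FG$. Therefore $N(FG)e\subseteq FG=\delta(FG)$ for any $e\in$ Id$(FG)$, which is exactly the Zhou right $e$-reduced condition. Equivalently, one may simply quote Proposition~\ref{lem}(4), which already asserts that every semisimple ring is Zhou $e$-reduced.

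Since the argument is a direct combination of Maschke's theorem with Proposition~\ref{lem}(4), there is essentially no hard computational step. The only point that deserves care is confirming that the notion of ``semisimple'' used in Proposition~\ref{lem}(4)---semisimple Artinian, for which $\delta$ coincides with the whole ring---is precisely what Maschke's theorem delivers here. Once this is noted, the conclusion is immediate, and in fact $FG$ turns out to be Zhou $e$-reduced (both right and left), not merely Zhou right $e$-reduced.
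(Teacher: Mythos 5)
Your proposal is correct and follows the same route as the paper: invoke Maschke's theorem to get semisimplicity, conclude $\delta(FG)=FG$ (the paper states this directly, while you justify it via the empty intersection of essential maximal right ideals in Lemma~\ref{besli}(1), matching Proposition~\ref{lem}(4)), and the containment $N(FG)e\subseteq\delta(FG)$ is then trivial. No gaps.
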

\begin{proof}  The group ring $FG$ is semisimple by Maschke's Theorem. So $\delta(FG) =
FG$. This completes the proof.
\end{proof}

In Theorem \ref{groupring}, $F$ being a field is not superfluous
as shown below.

\begin{ex}\rm Let $G$ be a group. By \cite[Proposition 2.11(2)]{HKB},
$\delta(\mathbb ZG) = 0$. Since the group ring $\mathbb ZG$ for
any group $G$ may have no nontrivial idempotent elements, $\mathbb
ZG$ need not be Zhou right $e$-reduced.
\end{ex}

\begin{thm} Let $R$ be a ring, $T$ be a subring of $R$ not necessarily having an identity,
$e\in$ Id$(R)$ and $f\in$ Id$(T)$ and $E = (e, f )\in$ Id$(D(R,
T))$. Then $R$ is Zhou right $e$-reduced and $T$ is Zhou right
$f$-reduced if and only if $D(R, T)$ is Zhou right $E$-reduced.
\end{thm}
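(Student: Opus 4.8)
The plan is to reduce the Zhou right $E$-reducedness of $D(R,T)$ to coordinatewise conditions by means of the structural descriptions collected in Lemma \ref{IR}: the description of nilpotents in part (5), the description of idempotents in part (4) (which guarantees that $E=(e,f)$ is genuinely idempotent), and above all the radical formula $\delta(D(R,T))=\delta(R)\oplus T$ of part (3). Throughout I would fix $E=(e,f)$ and expand the Dorroh product $(a,t)(e,f)=(ae,\,af+te+tf)$, so that membership of $(a,t)E$ in $\delta(D(R,T))$ is governed by its first coordinate alone, since the second always lands in $T$.

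For the forward implication, suppose $R$ is Zhou right $e$-reduced and $T$ is Zhou right $f$-reduced, and let $(a,t)\in N(D(R,T))$. By Lemma \ref{IR}(5) we have $a^{n}=0$ for some $n$, so $a\in N(R)$, and the $e$-hypothesis gives $ae\in\delta(R)$. Since the second coordinate $af+te+tf$ automatically lies in $T$, I would conclude $(a,t)E\in\delta(R)\oplus T=\delta(D(R,T))$ by Lemma \ref{IR}(3), so $D(R,T)$ is Zhou right $E$-reduced. It is worth flagging that, with Lemma \ref{IR}(3) in the stated form, this direction uses only the $e$-hypothesis; the $f$-hypothesis would become active for the second coordinate only if one worked instead with a formula of the shape $\delta(R)\oplus\delta(T)$.

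For the converse, assume $D(R,T)$ is Zhou right $E$-reduced. The $R$-part is immediate: given $a\in N(R)$, the element $(a,0)$ is nilpotent by Lemma \ref{IR}(5), and $(a,0)E=(ae,af)\in\delta(R)\oplus T$ forces $ae\in\delta(R)$, so $R$ is Zhou right $e$-reduced. Recovering the $T$-statement is the delicate step, and I expect it to be the main obstacle: testing a nilpotent $t\in N(T)$ through $(0,t)E=(0,\,te+tf)$ returns only the vacuous $0\in\delta(R)$, because the second coordinate is always in $T$. Thus the direct substitution carries no information about $\delta(T)$.

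To extract the $T$-statement I would instead argue inside the ideal $\mathcal T=\{0\}\times T$ of $D(R,T)$, which is a ring (without identity) isomorphic to $T$ and contains the idempotent $(0,f)$. First, the product $(a,t)(0,f)=(0,\,af+tf)$ together with Lemma \ref{IR}(3) shows that $D(R,T)$ is Zhou right $(0,f)$-reduced. I would then identify $\delta(\mathcal T)$ with $\mathcal T\cap\delta(D(R,T))$ via Lemma \ref{mod}(3) and Lemma \ref{delI}, and apply the transfer principle of Proposition \ref{nil1}(1) to the pair $(\mathcal T,(0,f))$ to pass Zhou right $(0,f)$-reducedness from $D(R,T)$ down to $\mathcal T$, hence to $T$. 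The crux — and the place where the precise shape of Lemma \ref{IR}(3) must be used rather than a generic direct-sum formula — is exactly the verification of the hypothesis $\delta(\mathcal T)=\mathcal T\cap\delta(D(R,T))$ demanded by Proposition \ref{nil1}(1): this is what would make the $T$-coordinate of the radical large enough to absorb $N(T)f$, and it is the step I would scrutinize most carefully, since everything in the $T$-direction hinges on it.
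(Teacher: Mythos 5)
Your forward direction coincides with the paper's: expand $(a,t)(e,f)=(ae,\,af+te+tf)$, use Lemma \ref{IR}(5) to get $a\in N(R)$, the $e$-hypothesis to get $ae\in\delta(R)$, and Lemma \ref{IR}(3) to conclude, since the second coordinate lies in $T$ automatically. (The paper writes the second coordinate as $(a+b)f+be$ and nominally invokes the $f$-hypothesis to place it in $T$, but as you observe that invocation is vacuous given the shape $\delta(R)\oplus T$.) Your recovery of the $R$-statement in the converse by testing $(a,0)$ is likewise exactly what the paper's ``the sufficiency is clear'' can actually deliver.

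The genuine gap is the $T$-part of the converse, and your proposed repair does not close it. You want to apply Proposition \ref{nil1}(1) to the ideal $\mathcal T=\{0\}\times T$ with idempotent $(0,f)$, which requires the hypothesis $\delta(\mathcal T)=\mathcal T\cap\delta(D(R,T))$. But by Lemma \ref{IR}(3) we have $\mathcal T\cap\delta(D(R,T))=\{0\}\times T=\mathcal T$, so the hypothesis you must verify is $\delta(T)=T$ --- precisely the information you are trying to extract. The obstruction is structural rather than a defect of your particular route: because $\delta(D(R,T))=\delta(R)\oplus T$ swallows the entire second coordinate, the condition ``$D(R,T)$ is Zhou right $E$-reduced'' is equivalent to ``$ae\in\delta(R)$ for every $a\in N(R)$'' (every $a\in N(R)$ occurs as the first coordinate of the nilpotent $(a,0)$, and the second coordinate of any product is automatically in $T$), i.e.\ to ``$R$ is Zhou right $e$-reduced'' alone. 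No argument can recover ``$T$ is Zhou right $f$-reduced'' from this, so the ``only if'' direction toward $T$ fails as stated unless $\delta(T)=T$. The paper conceals this by declaring the sufficiency clear; you have correctly located the soft spot, but it should be reported as a failure of the equivalence (or a missing hypothesis on $T$), not left as a step you merely intend to scrutinize.
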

\begin{proof} For the necessity, let $(a, b)\in N(D(R, T))$. By Lemma \ref{IR} (5), $a\in N(R)$ and $a + b\in N(T)$.
Since $R$ and $T$ are Zhou right $e$-reduced and $f$-reduced, respectively, $ae\in \delta(R)$ and $(a + b)f\in T$. So $(a, b)(e,
f) = (ae, (a + b)f + be)\in \delta(D(R, T))$. The sufficiency is clear.
\end{proof}

We now give some examples for Dorroh extensions.

\begin{exs}\label{four} \em (1) Consider the ring $R = M_2(\mathbb Z_2)$ and
the subring without identity $$T =  \left \{0,
\begin{bmatrix}1&1\\1&1\end{bmatrix},
\begin{bmatrix}1&1\\0&0\end{bmatrix}, \begin{bmatrix}0&0\\1&1\end{bmatrix}\right\}$$ of $R$. Then $D(R, T)$
is Zhou right $E$-reduced for each $E\in$ Id$(D(R, T))$.\\
(2) Let $S = \{a, b\}$ be the semigroup satisfying the relation
$xy = x$ for $x, y\in S$. Therefore we have the multiplication
$a^2 = ab = a$, $b^2 = ba = b$. Put $T = \Bbb Z_2S = \{0, a, b, a
+ b\}$, which is a four-element semigroup ring without identity
seeing in \cite[Example 1]{Ma}. Let $D(\Bbb Z, T)$ denote the
Dorroh extension of $T$ by $\Bbb Z$. Then $\delta(T) = \{0,  a +
b\}$ and $\delta(D(\Bbb Z, T)) = \delta(\Bbb Z)\oplus T$. Then
$D(\Bbb Z, T)$ is
Zhou $e$-reduced for each $e\in$ Id$(D(\Bbb Z, T))$.\\
(3) Let $T$ denote the semigroup in (2) and consider the ring
\begin{center} $D(\Bbb Z_2, T) = \{(0, 0), (1, 0), (0, a), (0, b), (0, a+b), (1, a), (1, b), (1, a+b)\}.$
 \end{center} Then $D(\Bbb Z_2, T)$ is Zhou $e$-reduced for each $e\in$ Id$(D(\Bbb Z_2, T))$.
\end{exs}
\begin{proof} We firstly note the fact that $\delta(D(R, T)) = \delta(R)\oplus T$ by Lemma \ref{IR}.\\
(1) It is obvious by $\delta (D(R, T)) = R\oplus T$ since
$\delta(R) = R$.\\
(2) We infer from \cite[Example 9]{Ber} that the set of nilpotent
elements of $D(\Bbb Z, T)$, $N(D(\Bbb Z, T)) = \{(0,0), (0, a +
b)\}$, is the Jacobson radical $J(D(\Bbb Z, T))$ and the right
socle Soc$(D(\Bbb Z, T))$. It entails that $\delta(D(\Bbb Z, T)) =
\{(0,0), (0,a), (0,b), (0, a + b)\}$. For any idempotent $e\in$
Id$(D(\Bbb Z, T))$,
$N(D(\Bbb Z, T))e\subseteq \delta(D(\Bbb Z, T))$. Thus $D(\Bbb Z, T)$ is Zhou $e$-reduced for each $e\in$ Id$(D(R, T))$.\\
(3) Obviously, the proper ideals of $D(\Bbb Z_2, T)$ are
\begin{center}$<(0, a)> = \{(0, 0), (0, a)\}$,\\ $<(0, b)> = \{(0, 0), (0, b)\}$,\\ $<(0, a + b)> = \{(0, 0), (0, a + b)\}$,\\
$<(1, a)> = <(1, b)> = \{(0, 0), (1, a), (1, b), (0, a + b)\}$.
\end{center}
 An easy calculation reveals that $J(D(\Bbb Z_2, T)) = \{(0, 0), (0, a + b)\} =  N(D(\Bbb Z_2, T))$, and
$\delta(D(\Bbb Z_2, T)) = \{(0, 0), (0, a), (0, b), (0, a+b)\}.$
Hence $D(\Bbb Z_2, T)$ is Zhou $e$-reduced for each $e\in$
Id$(D(\Bbb Z_2, T))$ since $N(D(\Bbb Z_2, T))\subseteq
\delta(D(\Bbb Z_2, T)).$
\end{proof}

We cite a ring defined by Nicholson in \cite[Example 2.15]{Nic}
and also studied by Zhou in \cite[Example 4.3]{Zh}.

\begin{prop} Let $F$ be a field and consider the ring
\begin{center}
$R = \{(x_1, x_2, \dots x_n, x, x, \dots )\mid n\in\Bbb {N},
x_i\in M_2(F), x\in U_2(F)\}$
\end{center}
with componentwise operations. Then $R$ is Zhou $e$-reduced for
each $e\in$ Id$(R)$.
\end{prop}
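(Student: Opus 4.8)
The plan is to reduce everything to the single containment $N(R)\subseteq\delta(R)$. By Proposition \ref{semi}, $\delta(R)$ is a (two-sided) ideal, so once $N(R)\subseteq\delta(R)$ is known we get $N(R)e\subseteq\delta(R)$ and $eN(R)\subseteq\delta(R)$ for every $e\in\text{Id}(R)$ and every nilpotent, i.e. $R$ is Zhou $e$-reduced for all $e$ (on the right this is exactly the content of Proposition \ref{lem}(3)). Thus the whole task is to identify $N(R)$ and $\delta(R)$ and compare them.

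First I would record $N(R)$. Writing a typical element as $a=(x_1,\dots,x_n,x,x,\dots)$ with $x_i\in M_2(F)$ and constant tail $x\in U_2(F)$, one has $a^k=(x_1^k,\dots,x_n^k,x^k,x^k,\dots)$, so (there being only finitely many coordinate types) $a$ is nilpotent if and only if each coordinate is. Hence $N(R)$ consists of the sequences with $x_i\in N(M_2(F))$ for all $i$ and tail $x\in N(U_2(F))=\begin{bmatrix}0&F\\0&0\end{bmatrix}$.

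Next I would compute $\delta(R)$ through the socle. Let $I=\bigoplus_{\mathbb N}M_2(F)$ be the ideal of eventually-zero sequences; then $R/I\cong U_2(F)$ via the tail map. The key step is to show $\text{Soc}(R_R)=I$: each coordinate copy $M^{(i)}\cong M_2(F)$ is a semisimple right ideal, so $I\subseteq\text{Soc}(R_R)$, and conversely a minimal right ideal $K$ not contained in $I$ would satisfy $K\cap I=0$, whence $K=kR$ for some $k$ with nonzero tail $t$. Choosing multipliers supported on a single tail coordinate shows $kR$ contains independent copies of $tM_2(F)\ne 0$ in infinitely many tail positions, so $kR$ is infinite-dimensional and cannot be simple, a contradiction. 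Thus $\text{Soc}(R_R)=I$, and by \cite[Corollary 1.7]{Zh} we get $\delta(R)/I=J(R/I)\cong J(U_2(F))=\begin{bmatrix}0&F\\0&0\end{bmatrix}$. Pulling this back along $R\to U_2(F)$ shows that $\delta(R)$ is precisely the set of sequences whose tail lies in $\begin{bmatrix}0&F\\0&0\end{bmatrix}$, with the finite coordinates completely unrestricted. (Alternatively one may simply quote the description of $\delta(R)$ from \cite[Example 4.3]{Zh}.)

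Comparing the two descriptions finishes the argument: an element of $N(R)$ has finite coordinates in $N(M_2(F))\subseteq M_2(F)$ and tail in $N(U_2(F))=\begin{bmatrix}0&F\\0&0\end{bmatrix}$, which is exactly the tail-condition defining $\delta(R)$ (the finite coordinates being unconstrained there). Hence $N(R)\subseteq\delta(R)$, and the reduction in the first paragraph gives that $R$ is Zhou $e$-reduced for every $e\in\text{Id}(R)$. The only genuinely delicate point is the socle computation, specifically ruling out minimal right ideals with nonzero tail; that is where I would concentrate the care, while the remaining comparison is immediate (and becomes entirely routine if one cites \cite[Example 4.3]{Zh} for $\delta(R)$).
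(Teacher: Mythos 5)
Your proposal is correct and follows essentially the same route as the paper: both identify $\delta(R)$ as the set of sequences with tail in $J(U_2(F))=N(U_2(F))$ (the paper simply quotes \cite[Example 4.3]{Zh} for $\mathrm{Soc}(R)$ and $\delta(R)$, which you note as your alternative), observe that $N(R)\subseteq\delta(R)$, and conclude via $\delta(R)$ being a two-sided ideal. Your direct verification that $\mathrm{Soc}(R_R)$ equals the eventually-zero sequences is a self-contained supplement rather than a different argument.
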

\begin{proof}
 It is proved in \cite[Example 4.3]{Zh} that
\begin{center}
Soc$(R) = \{(x_1, x_2, \dots x_n, 0, 0, \dots )\mid n\in\Bbb {N},
x_i\in M_2(F)\}$, $J(U_2(F)) = e_{12}U_2(F)$ and $\delta(R) =
\{(x_1, x_2, \dots , x_n, x, x, \dots )\mid n\in\Bbb {N}, x_i\in
M_2(F), x\in J(U_2(F))\}$.
\end{center}
Obviously, $N(R)\subseteq \delta(R)$ since $J(U_2(F)) =
N(U_2(F))$. Then for any $e\in$ Id$(R)$, $N(R)e\subseteq
\delta(R)$. This completes the proof.
\end{proof}

Let $R$ be a ring, $\sigma\colon R\rightarrow R$ be a ring
homomorphism and $R[[x, \sigma]]$ denote the ring of skew formal
power series $\{\sum^{\infty}_{i=0} a_ix^i\mid a_i\in R\}$. The
addition is usual one and multiplication is defined by $xa =
\sigma(a)x$.
We say  that a ring $R$ satisfy the property (P) if the following holds:
\begin{center}
Every idempotent of $R[x]$ and $R[[x]]$ is in $R$.
\end{center}
The class of Armendariz rings was initiated by Armendariz in
\cite{Ar}. The ring $R$ is called {\it Armendariz} if whenever
polynomials $f(x)= \sum^n_{i = 0}a_ix^i$, $g(x)= \sum^m_{j =
0}b_jx^j\in R[x]$ satisfy $f(x)g(x) = 0$, then $a_i b_j = 0$ for
each $i$, $j$. By \cite[Lemma 1]{Ar}, reduced rings are Armendariz and Armendariz rings are abelian. Also,
abelian rings satisfy the property (P).

\begin{prop}
Let $R$ be a reduced ring and $\sigma \colon R\rightarrow R$ a
ring homomorphism. Then $R[[x, \sigma]]$ is Zhou $e$-reduced for
each $e\in$ Id$(R)$. In particular, $R[[x]]$ is Zhou $e$-reduced
for each $e\in$ Id$(R)$.
\end{prop}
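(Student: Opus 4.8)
The plan is to prove the stronger statement that $R[[x,\sigma]]$ is \emph{$J$-reduced}, i.e. $N(R[[x,\sigma]])\subseteq J(R[[x,\sigma]])$, and then invoke Proposition \ref{lem}(7), which already gives that every $J$-reduced ring is Zhou $e$-reduced for every idempotent of the ambient ring, in particular for every $e\in\mathrm{Id}(R)\subseteq\mathrm{Id}(R[[x,\sigma]])$. Thus the only real work is the inclusion $N\subseteq J$, and, pleasantly, the structure of the idempotents of $R[[x,\sigma]]$ (and property (P)) is irrelevant for this particular statement, since $\delta$ is a two-sided ideal and we only need $Ne$ and $eN$ inside it.

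First I would introduce the constant-term map $\varepsilon\colon R[[x,\sigma]]\to R$ sending $\sum_{i\ge 0}a_ix^i$ to $a_0$. Since the coefficient of $x^0$ in a product $fg$ is $a_0\sigma^0(b_0)=a_0b_0$, the map $\varepsilon$ is a ring homomorphism, and this uses nothing about $\sigma$ beyond $\sigma^0=\mathrm{id}$. Let $M=\ker\varepsilon$ be the ideal of power series with zero constant term. If $f\in N(R[[x,\sigma]])$ with $f^n=0$, then $a_0^n=\varepsilon(f^n)=0$, so $a_0$ is nilpotent in $R$; as $R$ is reduced this forces $a_0=0$, that is, $f\in M$. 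Hence $N(R[[x,\sigma]])\subseteq M$.

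Next I would show $M\subseteq J(R[[x,\sigma]])$. Fix $m\in M$ and any $r\in R[[x,\sigma]]$. Then $rm\in M$, so $rm$ has zero constant term and every term of $(rm)^k$ has $x$-degree at least $k$. Consequently the series $\sum_{k\ge 0}(rm)^k$ is a well-defined element of $R[[x,\sigma]]$, because the coefficient of each $x^j$ receives contributions only from $k\le j$, and it is a two-sided inverse of $1-rm$. Thus $1-rm$ is a unit for every $r$, which is exactly the condition $m\in J(R[[x,\sigma]])$. Combining the two inclusions gives $N(R[[x,\sigma]])\subseteq M\subseteq J(R[[x,\sigma]])$, so $R[[x,\sigma]]$ is $J$-reduced; Proposition \ref{lem}(7) then finishes the proof, and the specialization $\sigma=\mathrm{id}$ yields the statement for $R[[x]]$.

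I do not expect a genuine obstacle here. The one place where reducedness is essential is the step forcing the constant term of a nilpotent to vanish, and the remainder is the standard power-series computation of the Jacobson radical, which carries over verbatim to the skew setting precisely because twisting by $\sigma$ does not lower the $x$-degree. The only point I would state with care is the well-definedness of the geometric series $\sum_{k}(rm)^k$, which holds because $rm\in M$ strictly raises the minimal $x$-degree under powering.
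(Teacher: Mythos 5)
Your proof is correct, but it takes a genuinely different --- and in fact more robust --- route than the paper's. The paper disposes of the proposition in one line by asserting that $N(R[[x,\sigma]])=N(R)=0$, i.e.\ that $R[[x,\sigma]]$ is reduced whenever $R$ is. That assertion is fine for $R[[x]]$ (the lowest-degree coefficient of $f^n$ is the $n$-th power of the lowest-degree coefficient of $f$, and reduced rings have no nilpotents), but for a general endomorphism $\sigma$ it is false: the Rimmer--Pearson example reproduced in Section 4 of the paper, with $R=D\times D$ reduced, $\sigma$ the swap, and $f=(1,0)x+(1,-1)x^2+(0,-1)x^3$ satisfying $f^2=0$, already lives inside $R[x,\sigma]\subseteq R[[x,\sigma]]$. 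Your argument sidesteps this entirely: reducedness of $R$ is used only to kill the constant term of a nilpotent (via the constant-term homomorphism, which is insensitive to $\sigma$ because $\sigma^0=\mathrm{id}$), and the standard geometric-series computation then places the whole augmentation ideal inside $J(R[[x,\sigma]])$, giving $N\subseteq J\subseteq\delta$ and letting Proposition \ref{lem}(7) finish. What your approach buys is a proof that actually covers the skew case as stated; what the paper's approach buys, when it does apply (e.g.\ $\sigma=\mathrm{id}$), is the stronger conclusion that the power series ring is reduced outright. All the individual steps you give --- multiplicativity of the constant-term map, the degree bound on $(rm)^k$ guaranteeing convergence of the geometric series, and the two-sided invertibility of $1-rm$ --- check out.
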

\begin{proof}  Clear by the fact that $N(R[[x, \sigma]]) = N(R) = 0$.
\end{proof}

There are non-reduced rings $R$ and homomorphisms $\sigma \colon R\rightarrow R$ such that $R[x]$ and $R[[x]]$ have non trivial idempotents that are not included in $R$.

\begin{ex}\em Let $R = U_2(\Bbb Z_2)$ and consider  the ring $R[x]$ and the homomorphism
\begin{center} $\sigma \colon U_2(\Bbb Z_2)\rightarrow U_2(\Bbb Z_2)$ defined by
$\sigma(e_{11}a + e_{12}b + e_{22}c) = e_{11}a + e_{12}(-b) +
e_{22}c$.
\end{center} Then $R[x]$ is non-abelian, as well as non-reduced and non-Armendariz.
\end{ex}
\begin{proof} Let $A = \begin{bmatrix}1&0\\0&0\end{bmatrix} + \begin{bmatrix}0&1\\0&0\end{bmatrix}x\in R[x]$.
 Then $A^2 = A$. The ring $R[x]$ is non-abelian since
$E = \begin{bmatrix}1&0\\0&0\end{bmatrix}$ is a non-central
idempotent, therefore $R[x]$ is not Armendariz by \cite[Lemma
7]{KL1}. On the other hand, $N =
\begin{bmatrix}0&1\\0&0\end{bmatrix}x\in R[x]$ being a non-zero nilpotent implies
that $R[x]$ is non-reduced.
\end{proof}

One may ask that $f(x) = \sum_ia_ix^i \in N(R[x])$ if and only if
all $a_i\in N(R)$. It is positive for polynomials over commutative
rings but need not hold skew polynomial rings even over
commutative rings.

\begin{ex}\label{son} \cite{RP} \em Let $D$ be an integral domain, $R = D\times D$ and  $\sigma$ be the automorphism of $R$ switching components. Then $R$ is commutative.
Let $f(x) = (1, 0)x + (1, -1)x^2 + (0, -1)x^3\in N(R[x, \sigma])$.
Then $f(x)^2 = 0$ but the coefficient of $x^2$ is not nilpotent in
$R$.
\end{ex}


\begin{thm}
If $R$ is a commutative ring, then $R[x]$ is Zhou e-reduced for each $e\in$ Id$(R[x])$.
\end{thm}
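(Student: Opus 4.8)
The plan is to reduce the claim to the known Zhou right $1$-reduced property by showing that over a commutative ring $R$, every nilpotent polynomial lands in $\delta(R[x])$. First I would recall that the definition requires $N(R[x])(e)\subseteq \delta(R[x])$ for each idempotent $e\in \mathrm{Id}(R[x])$, and by Proposition \ref{lem}(3) it suffices to prove the stronger statement $N(R[x])\subseteq \delta(R[x])$, i.e. that $R[x]$ is Zhou right $1$-reduced. So the whole problem collapses to understanding $N(R[x])$ for commutative $R$ and comparing it with $\delta(R[x])$.

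The key structural input is the classical description of nilpotents: for a \emph{commutative} ring $R$, a polynomial $f(x)=\sum_i a_ix^i\in R[x]$ is nilpotent if and only if every coefficient $a_i$ is nilpotent in $R$ (this is exactly the positive case flagged in the discussion preceding Example \ref{son}, where the skew case fails). Equivalently, $N(R[x]) = N(R)[x] = \mathrm{Nil}(R)[x]$, where $\mathrm{Nil}(R)$ is the nilradical of $R$. Since $R$ is commutative, $\mathrm{Nil}(R)$ is a nil ideal of $R$, so $N(R)[x]$ is a nil ideal of $R[x]$: any $f\in N(R)[x]$ has all coefficients in the nil ideal $\mathrm{Nil}(R)$, and because $\mathrm{Nil}(R)$ is an ideal, $f$ generates a two-sided ideal all of whose elements are nilpotent. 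The plan is then to invoke the fact used repeatedly in Proposition \ref{lem} (e.g. parts (2), (5), (6)) that nil one-sided ideals are contained in the Zhou radical $\delta$. Concretely, $N(R[x]) = \mathrm{Nil}(R)[x]$ is a nil ideal of $R[x]$, hence $N(R[x])\subseteq \delta(R[x])$.

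With $N(R[x])\subseteq \delta(R[x])$ established, $R[x]$ is Zhou right $1$-reduced by definition, and therefore Zhou right $e$-reduced for each $e\in \mathrm{Id}(R[x])$ by Proposition \ref{lem}(3). This finishes the argument. The one point deserving care is justifying that $N(R[x]) = \mathrm{Nil}(R)[x]$ and that this set is genuinely a nil ideal of $R[x]$ rather than merely a set of nilpotents: commutativity is essential here, both for the coefficient characterization of nilpotent polynomials and for $\mathrm{Nil}(R)$ being an ideal so that $\mathrm{Nil}(R)[x]$ is closed under multiplication by arbitrary polynomials.

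The main obstacle I anticipate is precisely the coefficientwise nilpotence characterization of $N(R[x])$ over commutative $R$, since Example \ref{son} shows the analogous statement collapses once one leaves the commutative polynomial setting; so the proof must lean explicitly on commutativity and should either cite this standard fact or give the short binomial/ideal argument that $\mathrm{Nil}(R)[x]$ is a nil ideal. Everything downstream of that identification is a routine appeal to the already-established machinery: nil ideals sit inside $\delta$, and Proposition \ref{lem}(3) upgrades the $1$-reduced conclusion to the $e$-reduced conclusion for every idempotent of $R[x]$.
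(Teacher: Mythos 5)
Your proof is correct, and at the decisive step it takes a different (and in fact safer) route than the paper. Both arguments begin identically: reduce to showing $N(R[x])\subseteq \delta(R[x])$ and use the classical fact that, for commutative $R$, a polynomial is nilpotent iff all its coefficients are, so $N(R[x])=N(R)[x]$. The paper then concludes via the chain $N(R)[x]\subseteq \delta(R)[x]\subseteq \delta(R[x])$, where the first inclusion comes from $N(R)\subseteq\delta(R)$ (commutative rings are quasi-duo, Theorem \ref{NR}). You instead observe that $N(R)[x]=\mathrm{Nil}(R)[x]$ is a nil two-sided ideal of $R[x]$ and invoke the fact, used repeatedly in Proposition \ref{lem}, that nil one-sided ideals lie in $J\subseteq\delta$. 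Your version is preferable: the paper's final inclusion $\delta(R)[x]\subseteq\delta(R[x])$ is not justified and is false in general --- for $R=\mathbb{Q}$ one has $\delta(\mathbb{Q})=\mathbb{Q}$ while $\delta(\mathbb{Q}[x])=0$, since $\mathbb{Q}[x]$ is a domain that is not a field (so its socle vanishes) with zero Jacobson radical. Your nil-ideal argument sidesteps this entirely and also makes it transparent that $R[x]$ is Zhou $e$-reduced on both sides, since $\delta(R[x])$ is a two-sided ideal containing all of $N(R[x])$.
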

\begin{proof} Let $R$ be a commutative ring. Then $f(x)\in N(R[x])$ if and only if the coefficients of $f(x)$ are nilpotent.
The ring $R$ being commutative implies $N(R)\subseteq \delta(R)$
and so
$N(R[x]) = N(R)[x]\subseteq \delta(R)[x]\subseteq \delta(R[x])$.
\end{proof}

\begin{ex}\rm Let  $R = \Bbb Z_2\oplus \Bbb Z_2$ and  $\sigma$ be the automorphism of $R$ switching components.
We claim  $N(R[x, \sigma])\subseteq \delta(R[x, \sigma])$, and so
$R[x, \sigma]$ is Zhou e-reduced for $e = (0, 1)$ or $e = (1, 0)$
in Id$(R[x, \sigma])$. Indeed,  note that $((1, 0)x)^2 = 0$, $((0,
1)x)^2 = 0$, $(1, 1)x(0, 1) = (1, 0)x$ and $(1, 1)x(1, 0) = (0,
1)x$. Hence $f(x) = a_0 + a_1x + a_2x^2 + \dots + a_nx^n\in N(R[x,
\sigma])$ if and only if $a_0 = 0$, and $a_i = (1, 0)$ or $a_i =
(0, 1)$ or $(1, 1)$. Let $0\neq f(x)\in N(R[x, \sigma])$. Then
$f(x)eR[x, \sigma]$ is a nil right ideal of $R[x, \sigma]$. It
entails that $f(x)e\in \delta(R[x, \sigma])$. So $N(R[x,
\sigma])e\subseteq \delta(R[x, \sigma])$. Thus $R[x, \sigma]$ is
Zhou $e$-reduced for $e = (0, 1)$ or $e = (1, 0)$ in Id$(R[x,
\sigma])$ by Proposition \ref{lem}(3).
\end{ex}
\section{Some Zhou $e$-reduced subrings of matrix rings}

In this section, we focus on some certain matrix rings in terms of  the Zhou $e$-reducedness.

{\bf The rings $H_3(\mathbb Z, R)$:} Let $R$ be a ring and
consider the ring $$H_3(\mathbb Z, R)=\left\{
\begin{bmatrix}n & a_1 & a_2 \\0 & a_3 & a_4 \\0 & 0 & n \\\end{bmatrix}\mid a_1,a_2,a_3,a_4\in R, n\in\Bbb Z
\right\}$$ with the usual matrix addition and multiplication. We
have the following.
\begin{lem}\label{son} Let $R$ be a ring. Then the following hold for the Zhou radical.\\
\item[(1)] $ N(H_3(\mathbb Z, R)) = \left\{\begin{bmatrix}0&a&b\\0&c&d\\0&0&0\end{bmatrix}\in H_3(\mathbb Z, R) \mid a,b,d\in R, c\in N(R)\right\}$.\\
\item[(2)] If $R$ is a simple ring, then $\delta(H_3(\mathbb Z, R))
= \begin{bmatrix}0&R&R\\0&0&R\\0&0&0\end{bmatrix}$, otherwise
$$\delta(H_3(\mathbb Z, R))=
\begin{bmatrix}0&R&R\\0&\delta(R)&R\\0&0&0\end{bmatrix}.$$
\end{lem}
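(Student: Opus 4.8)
The plan is to prove the two assertions separately: part (1) by an explicit nilpotency computation, and part (2) by trapping $\delta(H_3(\mathbb{Z},R))$ between a nilpotent ideal (which forces the strictly upper positions to be full) and the kernel of a convenient quotient (which kills the integer diagonal), then reading off the central $(2,2)$-entry from a corner.

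For part (1), I would write a typical element as $M=\begin{bmatrix}n&a_1&a_2\\0&a_3&a_4\\0&0&n\end{bmatrix}$ and note that the $(1,1)$- and $(3,3)$-entries of every power $M^k$ are $n^k$, while the $(2,2)$-entry is $a_3^k$. Nilpotency of $M$ therefore forces $n^k=0$ in $\mathbb{Z}$, hence $n=0$, and $a_3^k=0$, hence $a_3\in N(R)$; this gives the inclusion ``$\subseteq$''. For the converse, setting $n=0$, a routine induction yields $M^k=\begin{bmatrix}0&a_1a_3^{k-1}&a_1a_3^{k-2}a_4\\0&a_3^k&a_3^{k-1}a_4\\0&0&0\end{bmatrix}$ for $k\ge 2$, so if $a_3^m=0$ then $M^{m+2}=0$. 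Hence $M$ is nilpotent exactly when $n=0$ and $a_3\in N(R)$, which is the stated description.

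For part (2), write $S=H_3(\mathbb{Z},R)$. I would first bound the diagonal: the set $I=\begin{bmatrix}0&R&R\\0&R&R\\0&0&0\end{bmatrix}$ is a two-sided ideal of $S$ with $S/I\cong\mathbb{Z}$, and since $\delta(\mathbb{Z})=0$, applying Lemma \ref{mod}(3) to the projection $S\to S/I$ gives $\delta(S)\subseteq I$; in particular the $(1,1)$- and $(3,3)$-entries of any element of $\delta(S)$ vanish. Next I would fill the strictly upper positions: the set $J_0=\begin{bmatrix}0&R&R\\0&0&R\\0&0&0\end{bmatrix}$ is a two-sided ideal with $J_0^3=0$, so being a nilpotent ideal it lies in $\delta(S)$, which places all of $R$ in positions $(1,2)$, $(1,3)$ and $(2,3)$. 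Finally, to pin down the $(2,2)$-entry I would use the idempotent $f=\begin{bmatrix}0&0&0\\0&1&0\\0&0&0\end{bmatrix}\in S$, for which $fSf\cong R$; by Lemma \ref{mod}(1), $f\delta(S)f=\delta(fSf)\cong\delta(R)$, so the $(2,2)$-entries occurring in $\delta(S)$ are precisely those of $\delta(R)$, and moreover $\begin{bmatrix}0&0&0\\0&c&0\\0&0&0\end{bmatrix}\in\delta(S)$ if and only if $c\in\delta(R)$.

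Assembling the three steps gives the general formula. The containment $\delta(S)\subseteq I$ together with the corner identity forces every element of $\delta(S)$ into $\begin{bmatrix}0&R&R\\0&\delta(R)&R\\0&0&0\end{bmatrix}$, while $J_0\subseteq\delta(S)$, the membership of the pure $(2,2)$-matrices $\mathrm{diag}(0,c,0)$ for $c\in\delta(R)$, and additivity of $\delta(S)$ give the reverse inclusion; hence $\delta(S)=\begin{bmatrix}0&R&R\\0&\delta(R)&R\\0&0&0\end{bmatrix}$, which is the ``otherwise'' case. When $R$ is simple, $\delta(R)$ is an ideal of the simple ring $R$ and hence equals $0$ or $R$; in the case $\delta(R)=0$ the $(2,2)$-block collapses and we recover the first displayed formula. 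I expect the main obstacle to be the exact determination of the $(2,2)$-entry: the corner identity of Lemma \ref{mod}(1) is what ties it to $\delta(R)$, and the subtle point is that the pure matrix $\mathrm{diag}(0,c,0)$ must itself belong to $\delta(S)$ (not merely arise as the $(2,2)$-entry of some element), which is exactly what makes the two inclusions meet.
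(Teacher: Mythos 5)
Your proof of part (1) is correct and is essentially the paper's own computation. For part (2), however, you take a genuinely different route from the paper (which splits $H_3(\mathbb Z,R)=I_1\oplus I_2$ into two right ideals and computes $\delta$ of each summand as a module via Lemma \ref{mod}(2)), and your route contains a step that fails: the determination of the $(2,2)$-entry through the corner identity $f\delta(S)f=\delta(fSf)$. Taken at face value this makes the $(2,2)$-entries of $\delta(S)$ equal to $\delta(R)$ for \emph{every} $R$. But for a simple artinian ring such as $R=M_2(\mathbb Z_2)$ one has $\delta(R)=R$ (the ring is semisimple), so your formula would put all of $R$ in the $(2,2)$-slot, whereas the lemma asserts that slot is $0$ when $R$ is simple --- and the paper's next theorem relies on exactly this for $R=M_2(\mathbb Z_2)$. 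Your closing remark only treats the alternative $\delta(R)=0$ and silently drops the case $\delta(R)=R$, so the simple case is not proved; in fact your general formula contradicts it.

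The underlying problem is that the identity $\delta(fSf)=f\delta(S)f$ cannot be relied on in this triangular situation, even though it is stated as Lemma \ref{mod}(1): the paper's own computation $\delta(U_2(D))=\left[\begin{smallmatrix}0&D\\0&D\end{smallmatrix}\right]$ already violates it at the $(1,1)$-corner, since $\delta(e_{11}U_2(D)e_{11})=\delta(D)=D$ while $e_{11}\delta(U_2(D))e_{11}=0$. A direct check confirms that your conclusion is the one that is wrong here. For every maximal right ideal $M$ of $R$, the set $L_M=\{A\in S: A_{22}\in M\}$ is a maximal right ideal of $S$ (it is the preimage of $\mathbb Z\times M$ under $S\to S/J_0\cong\mathbb Z\times R$) and it is \emph{essential}: if a right ideal $T$ satisfies $T\cap L_M=0$, then $TJ_0\subseteq T\cap J_0=0$, and multiplying a general $A\in T$ on the right by elements of $J_0$ forces $A_{12}=A_{22}=0$, whence $A\in L_M$ and $T=0$. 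Consequently the $(2,2)$-entries of $\delta(S)$ lie in $\bigcap_M M=J(R)$, which is $0$ for any simple ring with identity; in particular $\mathrm{diag}(0,c,0)\notin\delta(S)$ for $0\neq c$, contrary to what your corner argument yields. (This computation also suggests that the correct $(2,2)$-entry in general is $J(R)$ rather than $\delta(R)$; the two agree for the paper's examples $\mathbb Z_2$ and $\mathbb Z_4$ but not, say, for $R=U_2(F)$.) To repair your argument you must replace the corner step by a direct analysis of the essential maximal right ideals of $S$ --- they are precisely the ideals $K_p$ cutting the diagonal down to $p\mathbb Z$ together with the $L_M$, and all of them are essential --- or follow the paper's direct-sum decomposition. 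Your first two steps (the nilpotent ideal $J_0\subseteq\delta(S)$ and the bound $\delta(S)\subseteq I$ killing the diagonal) are sound and can be kept.
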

\begin{proof} (1) One way is clear.
Let $A = \begin{bmatrix}0&a&b\\0&c&d\\0&0&0\end{bmatrix}\in H_3(\mathbb Z, R)$.
Assume that $c\in N(R)$ with $c^k = 0$ for some positive integer $k$. Then $A^{(k+2)} = 0$. Hence $A\in N(H_3(\mathbb Z, R))$.\\
(2) Let $R$ be a ring and $Emri$ denote the set of all essential
maximal right ideals in $R$. Consider the right ideals $I_1 =
\left\{\begin{bmatrix}n&a&b\\0&0&0\\0&0&n\end{bmatrix}\mid n\in
\mathbb Z, a, b\in R\right\}$, $I_2 =
\begin{bmatrix}0&0&0\\0&R&R\\0&0&0\end{bmatrix}$ of $H_3(\mathbb
Z, R)$. Then $H_3(\mathbb Z, R) = I_1\oplus I_2$.  So $\delta(I_1)
=
\begin{bmatrix}0&R&R\\0&0&0\\0&0&0\end{bmatrix}$. If $R$ is
simple, then $I_2$ is local and $\delta(I_2) =
\begin{bmatrix}0&0&0\\0&0&R\\0&0&0\end{bmatrix}$. Assume that $R$
is not simple. Then $\bigcap\limits_{I\in
Emri}\begin{bmatrix}0&0&0\\0&I&R\\0&0&0\end{bmatrix}$ is
$\delta(I_2)$. So $\delta(I_2) =
\begin{bmatrix}0&0&0\\0&\delta(R)&R\\0&0&0\end{bmatrix}$. It
follows that $\delta(H_3(\mathbb Z, R)) = \delta(I_1) \oplus
\delta(I_2)$. It depends on the simplicity of $R$.
\end{proof}

The following example shows the Zhou radicals related to the rings
$H_3(\mathbb Z, R)$.
\begin{ex}\label{zrad}\rm $\delta(H_3(\mathbb Z, \mathbb Z_2)) = \begin{bmatrix}0&\mathbb Z_2&\mathbb Z_2\\0&0&\mathbb Z_2\\0&0&0\end{bmatrix}$ and $\delta(H_3(\mathbb Z, \mathbb Z_4)) = \begin{bmatrix}0&\mathbb Z_4&\mathbb Z_4\\0&2\mathbb Z_4&\mathbb Z_4\\0&0&0\end{bmatrix}$.\\
\end{ex}

\begin{thm} The following hold.
\begin{enumerate}
\item[{(1)}] There are simple rings $R$ such that $H_3(\mathbb Z, R)$ are Zhou right (left) $E$-reduced for each $E\in$ Id$(H_3(\mathbb Z, R))$.
\item[{(2)}] There are simple rings $R$ such that $H_3(\mathbb Z, R)$
need not be Zhou right $E$-reduced for some $E\in$
Id$(H_3(\mathbb Z, R))$.
\item[{(3)}] There are rings $R$ that are not simple such that
$H_3(\mathbb Z, R)$ are Zhou right (left) $E$-reduced for some
$E\in$ Id$(H_3(\mathbb Z, R))$. \end{enumerate}
\end{thm}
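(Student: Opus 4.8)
The plan is to establish all three existence statements by producing explicit examples and reducing each verification to a single matrix entry. The common device is this: write a nilpotent of $H_3(\mathbb Z, R)$ as $A=\begin{bmatrix}0&a&b\\0&c&d\\0&0&0\end{bmatrix}$ with $c\in N(R)$ (by Lemma \ref{son}(1)) and an idempotent as $E=\begin{bmatrix}n&e_1&e_2\\0&e_3&e_4\\0&0&n\end{bmatrix}$, where squaring forces $n\in\{0,1\}$ and $e_3\in$ Id$(R)$. Multiplying out gives $AE=\begin{bmatrix}0&ae_3&ae_4+bn\\0&ce_3&ce_4+dn\\0&0&0\end{bmatrix}$ and $EA=\begin{bmatrix}0&na+e_1c&nb+e_1d\\0&e_3c&e_3d\\0&0&0\end{bmatrix}$. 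Comparing with the shape of $\delta(H_3(\mathbb Z, R))$ in Lemma \ref{son}(2), every entry outside position $(2,2)$ automatically lands in an admissible slot, so the whole problem collapses to whether $ce_3$ (respectively $e_3c$) lies in $\delta(R)$ --- which, when $R$ is simple, means $ce_3=0$ (respectively $e_3c=0$).

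For part (1) I would take $R$ to be a division ring, say the field $\mathbb Z_2$: it is simple with $N(R)=0$, so $c=0$ for every nilpotent $A$, whence $ce_3=e_3c=0$ and both $AE,EA$ lie in $\delta(H_3(\mathbb Z, R))$ for every $E$, giving the Zhou right and left $E$-reducedness for each $E$. For part (2) I would keep $R$ simple but reintroduce nilpotents, taking $R=M_2(\mathbb Z_2)$. Setting $c=e_{12}$ and $e_3=e_{22}$ yields $ce_3=e_{12}\neq 0$; assembling $A=\begin{bmatrix}0&0&0\\0&c&0\\0&0&0\end{bmatrix}\in N(H_3(\mathbb Z, R))$ and $E=\begin{bmatrix}0&0&0\\0&e_3&0\\0&0&0\end{bmatrix}\in$ Id$(H_3(\mathbb Z, R))$, the $(2,2)$-entry of $AE$ is $e_{12}\neq 0$, while the $(2,2)$-entry of $\delta(H_3(\mathbb Z, R))$ is $0$ because $R$ is simple. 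Hence $AE\notin\delta(H_3(\mathbb Z, R))$, so the ring fails to be Zhou right $E$-reduced for this $E$.

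For part (3) I would choose the non-simple ring $R=M_2(\mathbb Z)$, whose proper nonzero ideal $M_2(2\mathbb Z)$ witnesses non-simplicity and which satisfies $\delta(R)=M_2(\delta(\mathbb Z))=0$ since $\delta(\mathbb Z)=0$. Picking the idempotent $E=\begin{bmatrix}1&0&0\\0&0&0\\0&0&1\end{bmatrix}$, that is $e_3=0$, forces $ce_3=e_3c=0$ for every nilpotent $A$, so $AE,EA\in\delta(H_3(\mathbb Z, R))$ and $H_3(\mathbb Z, R)$ is Zhou right and left $E$-reduced for this $E$. The same ring also explains the word ``some'' in the statement: taking $e_3=e_{22}$ instead gives $ce_3=e_{12}\notin\delta(R)=0$, so the property genuinely cannot hold for all $E$.

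The computations themselves are routine; the crux is the reduction to the single entry $ce_3$ (or $e_3c$) together with the careful use of the simple-versus-non-simple dichotomy of Lemma \ref{son}(2), since this is exactly what discriminates between the three cases. The one point demanding real choice is part (3): I need a non-simple ring in which some product of a nilpotent and an idempotent escapes $\delta(R)$, and $M_2(\mathbb Z)$ --- where $\delta$ collapses to $0$ yet nilpotents are plentiful --- is what makes this both possible and transparent.
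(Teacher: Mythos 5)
Your proposal is correct and takes essentially the same approach as the paper: all three parts are settled by explicit witnesses, verified via the preceding lemma's descriptions of $N(H_3(\mathbb Z,R))$ and $\delta(H_3(\mathbb Z,R))$ by reducing everything to the $(2,2)$ entry of $AE$ and $EA$. The only differences are cosmetic choices of witnesses --- the paper likewise uses $R=\mathbb Z_2$ in (1) and $R=M_2(\mathbb Z_2)$ in (2) (with a bulkier pair $A$, $E$), while in (3) it takes $R=\mathbb Z_4$ (where $N(\mathbb Z_4)\subseteq\delta(\mathbb Z_4)$ already puts every nilpotent of $H_3(\mathbb Z,\mathbb Z_4)$ inside the Zhou radical) instead of your $M_2(\mathbb Z)$; both choices are valid.
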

\begin{proof}
(1) Let  $A = \begin{bmatrix}n&a&b\\0&c&d\\0&0&n\end{bmatrix}\in N(H_3(\mathbb Z, \mathbb Z_2))$. Then $c = 0$, $n = 0$. Hence $AE, EA\in \delta(H_3(\mathbb Z, \mathbb Z_2))$ for each $E\in$ Id$(H_3(\mathbb Z, \mathbb Z_2))$.\\
(2) Let $X = \begin{bmatrix}1&1\\0&0\end{bmatrix}$, $Y =
\begin{bmatrix}0&0\\1&0\end{bmatrix}$, $Z =
\begin{bmatrix}0&0\\0&1\end{bmatrix}$, $T = 0\in M_2(\mathbb
Z_2)$, $A = \begin{bmatrix}0&X&T\\0&Y&Z\\0&0&0\end{bmatrix}\in
N(H_3(\mathbb Z, M_2(\mathbb Z_2)))$ and $E =
\begin{bmatrix}0&0&0\\0&I_2&I_2\\0&0&0\end{bmatrix}\in$
Id$(H_3(\mathbb Z, M_2(\mathbb Z_2)))$. Note that
$AE = \begin{bmatrix}0&X&X\\0&Y&Y\\0&0&0\end{bmatrix}\notin \delta(H_3(\mathbb Z, M_2(\mathbb Z_2)))$ and $EA = \begin{bmatrix}0&0&0\\0&Y&Z\\0&0&0\end{bmatrix}\notin \delta(H_3(\mathbb Z, M_2(\mathbb Z_2)))$.\\
(3) Let $A = \begin{bmatrix}0&a&b\\0&c&d\\0&0&0\end{bmatrix}\in
N(H_3(\mathbb Z, \mathbb Z_4))$. There are two possibilities for
$c$, that is, $c = 0$ or $c = 2$. In case $c = 0$, there is nothing to do
since $A\in \delta(H_3(\mathbb Z, \mathbb Z_4))$. Otherwise, let $c
= 2$ and $E =
\begin{bmatrix}0&1&0\\0&1&0\\0&0&0\end{bmatrix}$. Then $AE =
\begin{bmatrix}0&a&0\\0&2&0\\0&0&0\end{bmatrix}\in
\delta(H_3(\mathbb Z, \mathbb Z_4))$ and $EA =
\begin{bmatrix}0&2&d\\0&2&d\\0&0&0\end{bmatrix}\in
\delta(H_3(\mathbb Z, \mathbb Z_4))$. In fact, $H_3(\mathbb Z,
\mathbb Z_4)$ is Zhou right and left $E$-reduced for some $E\in$
Id$(H_3(\mathbb Z, R))$.
\end{proof}
{\bf The rings $H_{(s,t)}(R)$:} Let $R$ be a ring and  $s$, $t\in
C(R)$ be invertible in $R$. Let\begin{center} $H_{(s,t)}(R) = \left
\{\begin{bmatrix}a&0&0\\c&d&f\\0&0&g
\end{bmatrix}\in M_3(R)\mid a, c, d, f, g\in R, a - d = sc, d - g = tf\right \}$.\end{center}
Then $H_{(s,t)}(R)$ is a subring of $M_3(R)$.

\begin{lem}\label{son1} Let  $A = \begin{bmatrix}a&0&0\\c&d&f\\0&0&g\end{bmatrix}\in H_{(s,t)}(R)$. Then
\begin{enumerate}
\item[(1)] $A\in N(H_{(s,t)}(R))$ if and only if $a$, $d$, $g\in N(R)$.
\item[(2)] $A\in \delta(H_{(1,1)}(R))$ if and only if $a$, $d$, $g\in \delta(R)$.
\item[(3)] $A\in$ Id$(H_{(1,1)}(R))$ if and only if $a$, $d$, $g\in$ Id$(R)$.
\end{enumerate}
\end{lem}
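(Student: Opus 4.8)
The plan is to prove the three equivalences in Lemma~\ref{son1} by exploiting the structure of $H_{(s,t)}(R)$, whose defining relations $a-d=sc$ and $d-g=tf$ force the off-diagonal entries $c$ and $f$ to be determined by the diagonal entries (since $s,t$ are central and invertible, we have $c=s^{-1}(a-d)$ and $f=t^{-1}(d-g)$). The key structural observation I would record first is how matrix multiplication behaves: for two such matrices, the product has diagonal $(a_1a_2,\,d_1d_2,\,g_1g_2)$, and one checks that the product again lies in $H_{(s,t)}(R)$ (the relations are preserved precisely because $s,t$ are central). Thus the diagonal map $A\mapsto(a,d,g)$ is a ring homomorphism onto a subring of $R\times R\times R$, which is the engine driving all three parts.

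For part (1), I would argue that $A\in N(H_{(s,t)}(R))$ iff $A^n=0$ for some $n$, and since powers of $A$ have diagonal $(a^n,d^n,g^n)$, nilpotence of $A$ forces $a,d,g\in N(R)$; conversely, if $a,d,g$ are nilpotent I would show $A$ is nilpotent by noting that the strictly-lower-and-upper part behaves nilpotently once the diagonal is killed. Concretely, if $a^k=d^k=g^k=0$, then a direct computation of a sufficiently high power (the off-diagonal entries $c,f$ are polynomial combinations of the $a,d,g$ via the relations) shows $A^N=0$ for $N$ large enough. For parts (2) and (3), I would specialize to $s=t=1$ so that $c=a-d$ and $f=d-g$ are \emph{forced}; here $H_{(1,1)}(R)$ is isomorphic, via the diagonal map, to a subring closely tied to $R\times R\times R$, and both $\delta$ and $\mathrm{Id}$ can be transported through this identification. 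For (3), idempotency $A^2=A$ is equivalent to $a^2=a$, $d^2=d$, $g^2=g$ together with the automatically-satisfied conditions on the off-diagonal entries (which are determined by the diagonal), giving the stated characterization. For (2), I would decompose $H_{(1,1)}(R)$ as a direct sum analogous to the $I_1\oplus I_2$ splitting used in Lemma~\ref{son}, then apply Lemma~\ref{mod}(2) to compute $\delta$ componentwise and Lemma~\ref{mod}(1) where corner rings appear.

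The main obstacle I anticipate is part (2): computing $\delta(H_{(1,1)}(R))$ requires identifying the essential maximal right ideals of $H_{(1,1)}(R)$, and the constraint $c=a-d$, $f=d-g$ makes the module structure less transparent than in the $H_3(\mathbb{Z},R)$ case. The cleanest route is to find an explicit internal direct-sum decomposition of $H_{(1,1)}(R)$ as a right module over itself whose summands are either semisimple or have $\delta$ equal to a copy of $\delta(R)$, and then invoke Lemma~\ref{mod}(2) to conclude $\delta(H_{(1,1)}(R))$ is exactly the set of matrices with $a,d,g\in\delta(R)$. I would also use Proposition~\ref{semi} (that $\delta(R)$ is semiprime) and Lemma~\ref{mod}(3) (homomorphisms preserve $\delta$) applied to the diagonal homomorphism to get the inclusion $\delta(H_{(1,1)}(R))\subseteq\{A\mid a,d,g\in\delta(R)\}$ for free, leaving only the reverse inclusion to be established by the decomposition argument.
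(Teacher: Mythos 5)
Your proposal is correct, and for the only nontrivial part, (2), it lands on exactly the paper's strategy: the paper decomposes $H_{(1,1)}(R)$ as a right module over itself into the three right ideals generated by $e_{11}+e_{21}$, $-e_{21}+e_{22}+e_{23}$ and $e_{23}-e_{33}$ (the matrices with diagonals $(a,0,0)$, $(0,d,0)$ and $(0,0,g)$, respectively) and applies Lemma \ref{mod}(2) to get $\delta(H_{(1,1)}(R))=\delta(I_1)\oplus\delta(I_2)\oplus\delta(I_3)$, each summand being the generator times $\delta(R)$. Parts (1) and (3) are handled in the paper by direct computation ($A^n$ has diagonal $(a^n,d^n,g^n)$, and a zero diagonal forces $A^{2n}=0$; for (3) the identities $c=ca+dc$ and $f=df+fg$ are checked by hand). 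What you add --- and should push to its conclusion --- is the observation that the diagonal map $A\mapsto(a,d,g)$ is a ring homomorphism: since $s,t$ are central and invertible, $c=s^{-1}(a-d)$ and $f=t^{-1}(d-g)$ are both determined by and solvable from the diagonal, so this map is in fact a ring \emph{isomorphism} of $H_{(s,t)}(R)$ onto all of $R\times R\times R$, not merely onto a subring. Once that is recorded, all three parts are immediate ($N$, Id and $\delta$ of a finite direct product of rings are computed componentwise, the last via Lemma \ref{mod}(2)), the explicit $I_1\oplus I_2\oplus I_3$ decomposition is just the pullback of the coordinate ideals, and parts (2) and (3) hold for arbitrary invertible central $s,t$ rather than only $s=t=1$. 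Two small cautions: Lemma \ref{mod}(3) is about module homomorphisms over a fixed ring, so to transport $\delta$ along the diagonal map you should either invoke the surjective-ring-homomorphism version (as used implicitly in Lemma \ref{del}) or, more simply, use that the map is an isomorphism; and Proposition \ref{semi} plays no role here.
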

\begin{proof} (1) One way is clear. For the other way, suppose that $a$, $d$, $g\in N(R)$ with $n$ nilpotency index of triples $a$, $d$, $g$, so that $a^n = 0$, $d^n = 0$ and $g^n = 0$. Then $A^n = \begin{bmatrix}a^n&0&0\\**&d^n&*\\0&0&g^n\end{bmatrix} = \begin{bmatrix}0&0&0\\**&0&*\\0&0&0\end{bmatrix}$. So $A^{2n} = 0$.\\
(2) Let $e_{ij}$ denote the matrix unit having $1$ at the  $(i, j)$ entry and
$0$ elsewhere, and consider $I_1 = \begin{bmatrix}1&0&0\\1&0&0\\0&0&0\end{bmatrix}R$,
$I_2 = \begin{bmatrix}0&0&0\\-1&1&1\\0&0&0\end{bmatrix}R$ and
$I_3 = \begin{bmatrix}0&0&0\\0&0&1\\0&0&-1\end{bmatrix}R$. Then $I_1$, $I_2$ and $I_3$ are right ideals of
$H_{(1,1)}(R)$ and $H_{(1,1)}(R) = I_1\oplus I_2\oplus I_3$. Hence
$\delta(H_{(1,1)}(R)) = \delta(I_1) \oplus \delta(I_2)\oplus \delta(I_3)$. Thus
$\delta(I_1) = \begin{bmatrix}1&0&0\\1&0&0\\0&0&0\end{bmatrix}\delta(R)$,
$\delta(I_2) =  \begin{bmatrix}0&0&0\\-1&1&1\\0&0&0\end{bmatrix}\delta(R)$
and $\delta(I_3) = \begin{bmatrix}0&0&0\\0&0&1\\0&0&-1\end{bmatrix}\delta(R)$.
Therefore $$\delta(H_{(1,1)}(R)) = \left\{\begin{bmatrix}a&0&0\\a-d&d&d-g\\0&0&g\end{bmatrix}\mid a, d, g\in\delta(R)\right\}.$$\\
(3) It is clear that $A^2 = A$ implies $a$, $d$, $g\in$ Id$(R)$.
Conversely, assume that $A\in H_{(1,1)}(R)$ with $a$, $d$, $g\in$
Id$(R)$. Having $a-d = c$ and $d-g=f$ imply, in turn, $c = a - d
= a^2 - da + da - d^2 = (a-d)a + d(a-d) = ca + dc$ and $f = d-g =
d^2 - dg + dg - g^2 = d(d - g) + (d - g)g = df + fg$. They entail
that $A^2 = A$.
\end{proof}
\begin{thm} A ring $R$ is Zhou right $e$-reduced for each $e\in$Id$(R)$ if and only if $H_{(1,1)}(R)$ is Zhou right $E$-reduced for each $E\in$ Id$(H_{(1,1)}(R))$.
\end{thm}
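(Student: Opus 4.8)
The plan is to reduce everything to the diagonal entries, exploiting the structural description already recorded in Lemma~\ref{son1}. The key preliminary observation is that, when $s=t=1$, an element of $H_{(1,1)}(R)$ is completely determined by its diagonal triple $(a,d,g)$, since the defining constraints force $c=a-d$ and $f=d-g$; moreover a short computation shows that the product of two such matrices with diagonals $(a,d,g)$ and $(a',d',g')$ is again of this form, with diagonal $(aa',dd',gg')$. Thus the assignment $A\mapsto(a,d,g)$ is in fact a unital ring isomorphism $H_{(1,1)}(R)\cong R\times R\times R$, and Lemma~\ref{son1} tells us that nilpotency, membership in $\delta$, and idempotency of $A$ are each detected \emph{precisely} on this diagonal. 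So the whole problem collapses to comparing $R$ with itself coordinate by coordinate, and one could alternatively deduce the statement from the product theorem proved earlier; below I keep the argument self-contained.

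For the necessity, let $E\in\mathrm{Id}(H_{(1,1)}(R))$ and $A\in N(H_{(1,1)}(R))$. By Lemma~\ref{son1}(3) the diagonal of $E$ is a triple $(e_1,e_2,e_3)$ of idempotents of $R$, and by Lemma~\ref{son1}(1) the diagonal of $A$ is a triple $(a,d,g)$ of nilpotents of $R$. Carrying out the product rule above, the diagonal of $AE$ is $(ae_1,de_2,ge_3)$. Since $R$ is assumed Zhou right $e$-reduced for \emph{every} idempotent $e$, I would apply this to each of $e_1,e_2,e_3$ to obtain $ae_1,de_2,ge_3\in\delta(R)$, and then invoke Lemma~\ref{son1}(2) to conclude $AE\in\delta(H_{(1,1)}(R))$.

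For the sufficiency, I would embed $R$ diagonally: given $e\in\mathrm{Id}(R)$ and $a\in N(R)$, set $A=\mathrm{diag}(a,a,a)$ and $E=\mathrm{diag}(e,e,e)$, where the off-diagonal entries are forced to be $0$. Then $A\in N(H_{(1,1)}(R))$ and $E\in\mathrm{Id}(H_{(1,1)}(R))$ by Lemma~\ref{son1}(1),(3), and $AE=\mathrm{diag}(ae,ae,ae)$. Applying the hypothesis to this single idempotent $E$ gives $AE\in\delta(H_{(1,1)}(R))$, whence $ae\in\delta(R)$ by Lemma~\ref{son1}(2).

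I do not expect a genuine obstacle, since Lemma~\ref{son1} does the heavy lifting; the one point requiring care is the handling of the quantifiers. The necessity direction genuinely needs the hypothesis for \emph{all three} idempotents $e_1,e_2,e_3$ simultaneously, which is exactly why the statement quantifies over every $e\in\mathrm{Id}(R)$, whereas the sufficiency direction only ever uses the constant-diagonal idempotent. The only computation to double-check is that the $(2,1)$ and $(2,3)$ entries of $AE$ are consistent with membership in $H_{(1,1)}(R)$, i.e.\ that $AE$ really lies in the subring whose $\delta$ is described by Lemma~\ref{son1}(2); this is immediate once the product formula is verified.
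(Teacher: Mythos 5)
Your proof is correct and follows essentially the same route as the paper's: the necessity direction reads off the diagonal triple of $AE$ and applies the hypothesis to each of the three idempotent diagonal entries before invoking Lemma~\ref{son1}(2), and the sufficiency direction tests the scalar matrices $A=aI_3$ and $E=eI_3$, exactly as in the paper. Your additional observation that $A\mapsto(a,d,g)$ gives an isomorphism $H_{(1,1)}(R)\cong R\times R\times R$ is a nice (and correct) gloss, but the argument you actually run is the paper's.
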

\begin{proof} For the forward direction, let $A = \begin{bmatrix}a&0&0\\c&d&f\\0&0&g \end{bmatrix}\in N(H_{(1,1)}(R))$ and $E = \begin{bmatrix}x&0&0\\y&z&t\\0&0&u\end{bmatrix}\in$ Id$(H_{(1,1)}(R))$. Then $a$, $d$, $g\in N(R)$ and $x$, $z$, $u\in$ Id$(R)$. Since $R$ is Zhou right $e$-reduced, $ax$, $dz$, $gu\in \delta(R)$. By Lemma \ref{son1}(2), $AE\in \delta(H_{(1,1)}(R))$.\\
For the backward direction, let $a\in N(R)$ and $e\in$ Id$(R)$. Then $A = aI_3\in N(H_{(1,1)}(R))$ and $E = eI_3\in$ Id$(H_{(1,1)}(R))$. Since $H_{(1,1)}(R)$ is Zhou right $E$-reduced, $AE\in \delta(H_{(1,1)}(R))$. By Lemma \ref{son1}(2), $ae\in \delta(R)$.
\end{proof}

{\bf Generalized matrix rings:} Let $R$ be a ring and $s$ a
central element of $R$. Then  $\begin{bmatrix}
R&R\\R&R\end{bmatrix}$ becomes a ring denoted by $K_s(R)$ with
addition defined componentwise and multiplication defined in
\cite{Kr} by
$$\begin{bmatrix} a_1&x_1\\y_1&b_1\end{bmatrix}\begin{bmatrix}
a_2&x_2\\y_2&b_2\end{bmatrix} = \begin{bmatrix}a_1
a_2+sx_1y_2&a_1x_2+x_1b_2\\y_1a_2+b_1y_2&sy_1x_2+b_1b_2\end{bmatrix}.$$
The ring $K_s(R)$ is called a {\it generalized matrix ring
over $R$}.
\begin{lem}\label{delt} Let $R$ be a ring and $A = \begin{bmatrix} a&x\\y&b\end{bmatrix}\in K_0(R)$. Then the following hold.
\begin{enumerate}
\item $A \in N(K_0(R))$ if and only if $a$, $b\in N(R)$.
\item $A\in \delta(K_0(R))$ if and only if $a$, $b\in \delta(R)$.
\item If $A\in$ Id$(K_0(R)))$, then $a$, $b\in$ Id$(R)$.
\end{enumerate}
\end{lem}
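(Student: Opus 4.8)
The plan is to establish each of the three equivalences by direct computation with the multiplication of $K_0(R)$, which (setting $s=0$) simplifies drastically to
$$\begin{bmatrix} a_1&x_1\\y_1&b_1\end{bmatrix}\begin{bmatrix} a_2&x_2\\y_2&b_2\end{bmatrix} = \begin{bmatrix}a_1a_2&a_1x_2+x_1b_2\\y_1a_2+b_1y_2&b_1b_2\end{bmatrix}.$$
The key structural observation is that in $K_0(R)$ the two diagonal entries multiply independently of each other and of the off-diagonal entries, so the diagonal map $A\mapsto(a,b)$ is a ring homomorphism onto $R\times R$. Each of the three parts then reduces to tracking what happens to $a$ and $b$ under this map while controlling the off-diagonal entries $x,y$ by a separate argument.

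For part (1), the forward direction is immediate: if $A^n=0$ then comparing diagonal entries gives $a^n=0$ and $b^n=0$. For the converse, I would suppose $a^n=0$ and $b^n=0$, note that $A^k$ has diagonal entries $a^k,b^k$ (so both vanish for $k\geq n$), and observe that once the diagonal is zero a matrix $\begin{bmatrix}0&x\\y&0\end{bmatrix}$ squares to $\begin{bmatrix}0&0\\0&0\end{bmatrix}$ in $K_0(R)$ (since $s=0$ kills the cross terms $sx y$). Hence $A^n$ has zero diagonal and $A^{2n}=(A^n)^2=0$, so $A$ is nilpotent.

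For part (2), I plan to mimic the decomposition method already used for $H_{(1,1)}(R)$ in Lemma~\ref{son1}(2). I would exhibit $K_0(R)=I_1\oplus I_2$ as a direct sum of right ideals, where $I_1=\begin{bmatrix}1&0\\0&0\end{bmatrix}K_0(R)$ (the first column block) and $I_2=\begin{bmatrix}0&0\\0&1\end{bmatrix}K_0(R)$ (the second column block), then apply Lemma~\ref{mod}(2) to get $\delta(K_0(R))=\delta(I_1)\oplus\delta(I_2)$. Computing $\delta(I_1)$ should force the $a$-entry into $\delta(R)$ while leaving the off-diagonal entry of that block free in $R$, and symmetrically for $I_2$; assembling the pieces yields that $A\in\delta(K_0(R))$ iff $a,b\in\delta(R)$ with $x,y$ unconstrained. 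Part (3) is routine: $A^2=A$ forces $a^2=a$ and $b^2=b$ on the diagonal directly.

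The main obstacle I expect is the precise computation in part (2): I must verify that $I_1,I_2$ really are right ideals of $K_0(R)$ under the twisted ($s=0$) multiplication and that their direct sum is all of $K_0(R)$, and then correctly identify the Zhou radical of each column block as an $R$-module. The subtle point is confirming that the off-diagonal entries contribute nothing to the radical condition (they range over all of $R$), which is where the vanishing of the cross term $s=0$ is essential; I would check this by identifying each $\delta(I_i)$ via Lemma~\ref{mod}(3) applied to the projection onto the relevant diagonal coordinate, analogously to how $\delta(I_2)$ was handled in Lemma~\ref{son}.
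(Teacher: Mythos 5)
Parts (1) and (3) of your proposal are correct and essentially identical to the paper's argument: the diagonal entries of $A^{n}$ are $a^{n}$ and $b^{n}$, and once the diagonal vanishes the matrix squares to zero because $s=0$ kills the cross terms. The problem is part (2). Your decomposition $K_0(R)=e_{11}K_0(R)\oplus e_{22}K_0(R)$ is fine (note these are the \emph{row} blocks $\left[\begin{smallmatrix}R&R\\0&0\end{smallmatrix}\right]$ and $\left[\begin{smallmatrix}0&0\\R&R\end{smallmatrix}\right]$, not column blocks), and Lemma \ref{mod}(2) does give $\delta(K_0(R))=\delta(I_1)\oplus\delta(I_2)$. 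But the tool you propose for identifying $\delta(I_1)$ --- Lemma \ref{mod}(3) applied to the projection onto the diagonal coordinate --- cannot finish the job. That lemma yields only the one-sided inclusion $f(\delta(M))\subseteq\delta(N)$, so at best it bounds $\delta(I_1)$ from above; it says nothing about the containment $\left[\begin{smallmatrix}\delta(R)&R\\0&0\end{smallmatrix}\right]\subseteq\delta(I_1)$, which is precisely the ``if'' direction of (2). Moreover the target of that projection is $R$ regarded as a $K_0(R)$-module through the surjection onto the $(1,1)$-entry, and $\delta$ is not stable under change of base ring (the paper's example showing $\delta(R/\delta(R))\neq 0$ is a warning here), so even the upper bound would need extra care.

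To close the gap you need two separate facts, which is how the paper argues. For the diagonal, use Lemma \ref{mod}(1): $e_{11}\delta(K_0(R))e_{11}=\delta(e_{11}K_0(R)e_{11})=\delta(R)$, and similarly for $e_{22}$; this pins the $(1,1)$ and $(2,2)$ Peirce components of $\delta(K_0(R))$ to exactly $\delta(R)$ and gives both directions at once on the diagonal. For the off-diagonal, observe that $\left[\begin{smallmatrix}0&R\\0&0\end{smallmatrix}\right]$ and $\left[\begin{smallmatrix}0&0\\R&0\end{smallmatrix}\right]$ are right ideals of $K_0(R)$ that square to zero precisely because $s=0$; nilpotent right ideals are contained in $\delta(K_0(R))$, so the off-diagonal entries are unconstrained. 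Your intuition that the vanishing of $s$ is what frees the off-diagonal entries is right, but the mechanism is the nilpotency of these strips as right ideals, not a projection argument.
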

\begin{proof} (1) Let $A = \begin{bmatrix} a&x\\y&b\end{bmatrix}\in N(K_0(R))$
with $A^n = 0$ for some positive integer $n$. Then $a^n = 0$ and $b^n = 0$. Conversely,
let $a^n = 0$, $b^m = 0$ and $k = max\{n,m\}$. Then $A^k = \begin{bmatrix} 0&*\\**&0\end{bmatrix}$.
 Hence $A^{2k} = 0$.\\
(2) Let $\delta(K_0(R)) = \begin{bmatrix}K&M\\N&L\end{bmatrix}$ and $e_{11}\in$ Id$(K_0(R))$.
By Lemma \ref{mod}(1), we have the relations $K = e_{11}\delta(K_0(R))e_{11} = \delta(e_{11}K_0(R)e_{11})
= \delta(R)$. Similarly, we may get $L = \delta(R)$. Also, $X =
\begin{bmatrix}0&M\\0&0\end{bmatrix}$ and $Y = \begin{bmatrix}0&0\\N&0\end{bmatrix}$
are nilpotent right ideals of $K_0(R)$. It entails that $X$ and $Y$ are contained in $\delta(K_0(R))$.\\
(3) Let $A = \begin{bmatrix}a&x\\y&b\end{bmatrix}\in$
Id$(K_0(R))$. Then $a^2 = a$, $b^2 = b$.
\end{proof}

The converse statement of Lemma \ref{delt}(3) need not hold in general.
\begin{ex} {\em Let $A = \begin{bmatrix}1&0\\1&1\end{bmatrix}\in K_0(\mathbb Z_7)$.
Then diagonal entries of $A$ are idempotent but $A$ is not
idempotent. In fact, $A^2 =
\begin{bmatrix}1&0\\2&1\end{bmatrix}\neq A$. Hence $A\not\in$
Id$(K_0(\mathbb Z_7))$.}
\end{ex}
\begin{thm} A ring $R$ is Zhou right $e$-reduced for each $e\in$ Id$(R)$ if and only if $K_0(R)$ is Zhou right $E$-reduced for each $E\in$ Id$(K_0(R))$.
\end{thm}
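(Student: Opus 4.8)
The plan is to prove both directions by reducing the $K_0(R)$ statement to the scalar case of $R$ via the structural results already established in Lemma \ref{delt}. The key point is that Lemma \ref{delt} gives us complete control over nilpotents, Zhou radical elements, and (one direction of) idempotents of $K_0(R)$ purely in terms of the diagonal entries $a,b$, while the off-diagonal entries $x,y$ are always absorbed into $\delta(K_0(R))$. So the whole argument should hinge on matching up the diagonal data.

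For the forward direction, I would take $A=\begin{bmatrix}a&x\\y&b\end{bmatrix}\in N(K_0(R))$ and $E=\begin{bmatrix}p&u\\v&q\end{bmatrix}\in\mathrm{Id}(K_0(R))$. By Lemma \ref{delt}(1), $a,b\in N(R)$, and by Lemma \ref{delt}(3), $p,q\in\mathrm{Id}(R)$. The product $AE$ has diagonal entries $ap$ (top-left) and $yu+bq$ (bottom-right); since the multiplication in $K_0(R)$ uses $s=0$, the cross term $sx\cdot(\,\cdot\,)$ vanishes, which keeps the computation clean. The top-left diagonal entry of $AE$ is $ap$, and since $R$ is Zhou right $e$-reduced for \emph{each} idempotent, $ap\in\delta(R)$; similarly the bottom-right diagonal entry is $bq$ plus an off-diagonal-type contribution, but by Lemma \ref{delt}(2) only the genuine diagonal entries $ap$ and $bq$ need to lie in $\delta(R)$, and both do. Hence $AE\in\delta(K_0(R))$, so $K_0(R)$ is Zhou right $E$-reduced.

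For the backward direction, I would embed a scalar nilpotent and idempotent diagonally: given $a\in N(R)$ and $e\in\mathrm{Id}(R)$, set $A=\begin{bmatrix}a&0\\0&a\end{bmatrix}$ and $E=\begin{bmatrix}e&0\\0&e\end{bmatrix}$. By Lemma \ref{delt}(1), $A\in N(K_0(R))$, and a direct check (using $s=0$) shows $E\in\mathrm{Id}(K_0(R))$. By hypothesis $AE=\begin{bmatrix}ae&0\\0&ae\end{bmatrix}\in\delta(K_0(R))$, and then Lemma \ref{delt}(2) forces the diagonal entry $ae\in\delta(R)$. Since $a\in N(R)$ and $e\in\mathrm{Id}(R)$ were arbitrary, $R$ is Zhou right $e$-reduced for each $e\in\mathrm{Id}(R)$.

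The main obstacle I anticipate is bookkeeping the off-diagonal entries of the product $AE$ in the forward direction and confirming they are harmless. Because Lemma \ref{delt}(2) only constrains the two diagonal entries (the off-diagonal blocks of $\delta(K_0(R))$ being the full nilpotent right ideals $\begin{bmatrix}0&M\\0&0\end{bmatrix}$, $\begin{bmatrix}0&0\\N&0\end{bmatrix}$ sitting inside $\delta(K_0(R))$), the off-diagonal entries of $AE$ automatically land in $\delta(K_0(R))$ regardless of their value, so no extra hypothesis is needed there. The only genuine content is that the two diagonal entries of $AE$ are products of a nilpotent of $R$ with an idempotent of $R$, which the ``for each $e$'' hypothesis handles directly; the $s=0$ condition is what guarantees these diagonal entries are exactly $ap$ and $bq$ with no mixing, so I would flag that as the step to state carefully.
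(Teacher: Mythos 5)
Your argument is correct and follows essentially the same route as the paper's own proof: both directions reduce to the diagonal entries via Lemma \ref{delt}, using the fact that the off-diagonal blocks of $K_0(R)$ always sit inside $\delta(K_0(R))$, and the converse uses the same diagonal embedding $A=aI_2$, $E=eI_2$. The only hiccup is the momentary claim that the bottom-right entry of $AE$ is $yu+bq$; since $s=0$ that entry is exactly $bq$, as you yourself note at the end, so nothing is lost.
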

\begin{proof} For the necessity, let $A = \begin{bmatrix} a&x\\y&b\end{bmatrix}\in N(K_0(R))$ with $A^n = 0$ for some positive integer $n$.
 By Lemma \ref{delt}(1), $a$, $b\in N(R)$. Let $E = \begin{bmatrix} e&*\\ *&f\end{bmatrix}\in$ Id$(K_0(R))$. Then $e$, $f\in$ Id$(R)$. Since $R$ is Zhou right $e$-reduced for each $e\in$ Id$(R)$, $ae$, $bf\in \delta(R)$. By Lemma \ref{delt}(2), $AE = \begin{bmatrix} ae&**\\ **&bf\end{bmatrix}\in \delta(K_0(R))$.
For the sufficiency, let $a\in N(R)$, $e\in$ Id$(R)$, $A =
\begin{bmatrix} a&0\\0&a\end{bmatrix}$ and $E = eI_2$. Then $A\in
N(K_0(R))$, $E\in$ Id$(K_0(R))$ and $AE = \begin{bmatrix}
ae&0\\0&ae\end{bmatrix}$. Since $K_0(R)$ is Zhou right
$E$-reduced, $AE\in \delta(K_0(R))$. By Lemma \ref{delt}(2),
$ae\in \delta(R)$.
\end{proof}

\noindent{\bf Disclosure statement.} The authors report there are
no competing interests to declare.

\end{document}